\newtheorem{theorem}{Theorem}
\newtheorem{proposition}{Proposition}
\theoremstyle{definition}
\newtheorem{definition}{Definition}
\newtheorem{remark}{Remark}
\theoremstyle{plain}
\newtheorem{corollary}{Corollary}
\newtheorem*{jt}{Jonker's Theorem}
\newtheorem*{sacks}{Sacksteder Rigidity Theorem}
\newcommand{\vt}{\vspace{.1cm}}
\newcommand{\vtt}{\vspace{.2cm}}
\newcommand{\R}{\mathbb{R} }
\newcommand{\h}{\mathbb{H} }
\begin{document}
\title[Isometric Immersions into Space Forms]{Convexity, Rigidity, and Reduction of  \\ Codimension of Isometric Immersions\\ into Space Forms}
\author[ R. F. de Lima and R. L. de Andrade]{Ronaldo F. de Lima and Rubens L. de Andrade}
\address{Departamento de Matem\'atica - Universidade Federal do Rio Grande do Norte}
\email{rubensleao@ccet.ufrn.br, ronaldo@ccet.ufrn.br}
\subjclass[2010]{53B02 (primary),  53C42 (secondary).}
\keywords{isometric immersion  -- convexity -- rigidity -- reduction of codimension.}
\maketitle
\centerline{To Elon Lima, \emph{in memoriam},}
\centerline{and}
\centerline{Manfredo do Carmo, on his 90th birthday}
\begin{abstract}
We consider isometric immersions of  complete connected Riemannian manifolds  into
space forms of nonzero constant curvature.
We prove that if such an immersion is compact and has semi-definite second fundamental form,
then it is an embedding with codimension one, its image bounds a convex  set, and it is rigid.
This result generalizes previous ones by M. do Carmo and E. Lima, as well as by M. do Carmo and F. Warner.
It also settles affirmatively a conjecture by do Carmo and Warner. We establish a similar result
for complete isometric immersions satisfying a stronger
condition on the second fundamental form. We extend  to the context of
isometric immersions in space forms  a classical theorem for Euclidean hypersurfaces due to Hadamard. In this same context,
we prove an existence theorem for hypersurfaces with prescribed boundary and vanishing Gauss-Kronecker curvature.
Finally, we show that isometric immersions into space forms which are regular outside the set
of totally geodesic points admit a reduction of codimension to one.
\end{abstract}

\section{Introduction}
Convexity and rigidity are among the most essential concepts in the theory of submanifolds.
In his work, R. Sacksteder established
two fundamental results involving these concepts. 
Combined, they state that for \,$M^n$\,  a non-flat
complete connected Riemannian manifold with nonnegative  sectional curvatures, any isometric immersion \,$f:M^n\rightarrow\R^{n+1}$\,
is, in fact, an embedding and has \,$f(M)$\, as the boundary of a convex set. In particular, \,$M$\,
is diffeomorphic to the Euclidean space \,$\R^n$\,  or to the unit sphere \,$S^n$\, (Sacksteder \cite{sacksteder1}).
In the latter case, \,$f$\, is rigid, that is, for any other isometric immersion  \,$g:M^n\rightarrow \R^{n+1},$\, there exists a rigid motion
\,$\Phi:\R^{n+1}\rightarrow \R^{n+1}$\, such that \,$g=\Phi\circ f$\, (Sacksteder \cite{sacksteder2}).

The convexity part of this statement 
is a Hadamard-Stoker type
theorem, since it refers to the results of Hadamard \cite{hadamard} and Stoker \cite{stoker}, who considered
the compact and complete cases, respectively, assuming \,$n=2$\, and \,$M$\, with positive curvature everywhere.
The rigidity part is a generalization of the classical Cohn-Vossen rigidity theorem for ovaloids.

Naturally, the extension of Sacksteder's results to
general isometric immersions into space forms became a matter of interest.
However, as shown by the standard  immersion of \,$S^n\times S^n$\,  into
\,$\R^{2n+2},$\, Sacksteder Theorem \cite{sacksteder1} is not valid in higher codimension.
On the other hand, its proof relies mostly on the semi-definiteness of the
second fundamental form of the immersion, which, in codimension one,
is equivalent to the assumed non-negativeness of the sectional curvatures of \,$M.$\,
Thus, in order to get similar results in higher codimension, it is natural
to assume that the second fundamental form is semi-definite, that is,
at each point and in any normal direction, all
the nonzero eigenvalues of the corresponding shape
operator have the same sign.

M. do Carmo and E. Lima \cite{docarmo-lima} established  a Hadamard-Stoker type
theorem  for isometric immersions  of compact manifolds into Euclidean space with
arbitrary codimension.
Namely, they proved that if \,$M^n$\, is a compact connected Riemannian manifold and
\,$f:M^n\rightarrow\R^{n+p}$\, is an isometric immersion whose  second fundamental form is semi-definite
(and definite at one point), then
\,$f$\, admits a reduction of  codimension to one and embeds \,$M$\, onto the boundary of a
compact convex set.
Subsequently, this result was extended by L. Jonker \cite{jonker} to  complete isometric
immersions (see Section \ref{sec-proof1and2} for a precise statement).

In \cite{docarmo-warner}, M. do Carmo and F. Warner considered compact connected
hypersurfaces \,$f:M^n\rightarrow S^{n+1}.$\,
They proved that if  all sectional curvatures of  \,$M$\,
are greater than or equal to \,$1$\, (the curvature of the ambient space), then \,$f$\, is rigid and embeds
\,$M$\, onto the boundary of a compact convex set contained in an open hemisphere of \,$S^{n+1}.$\,
In addition, it was shown that, except for the rigidity part, this theorem remains valid
if one replaces the sphere \,$S^{n+1}$\, by the hyperbolic space \,$\h^{n+1}$\, and assume that
all sectional curvatures of  \,$M$\, are no less than \,$-1.$\,
The authors also conjectured the rigidity of \,$f$\, for this case.

In the present paper, we extend  do Carmo-Lima and do Carmo-Warner theorems to isometric
immersions of arbitrary codimension into space forms
of nonzero constant curvature.
We settle affirmatively, as well, the aforementioned do Carmo and Warner's conjecture.
More precisely, we obtain the following  result.

\begin{theorem}  \label{th-convexity}
Let \,$f:M^n\rightarrow Q_c^{n+p}$\, be an
isometric immersion of a  compact connected Riemannian manifold  into
the space form of constant curvature \,$c\ne 0.$\, Assume  that \,$f$\, is non-totally geodesic and has
semi-definite second fundamental form.
Then, \,$f$\, is an embedding of  \,$M$\, into a totally geodesic $(n+1)$-dimensional
submanifold \,$Q_c^{n+1}\subset Q_c^{n+p},$\,
\,$f(M)$\, is the boundary
of a compact convex set of \,$Q_c^{n+1},$\, and \,$f$\ is rigid.
In particular, \,$M$\, is diffeomorphic to a sphere.
\end{theorem}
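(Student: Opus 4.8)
The plan is to reduce everything to the codimension-one, compact-hypersurface situation and then invoke (for convexity) a Hadamard--Stoker argument in the space form and (for rigidity) a Cohn-Vossen/do Carmo--Warner type argument. First I would establish the reduction of codimension. Since $f$ is compact and has semi-definite second fundamental form, I claim it is automatically definite at some point: integrating the fact that the sum of the principal curvatures (the mean curvature vector) cannot vanish identically on a compact manifold, or arguing at a point where $f(M)$ touches a geodesic sphere of the ambient $Q_c^{n+p}$ from inside, one produces a point where the second fundamental form is definite in a suitable normal direction. Once there is a point of definiteness, the nullity of the second fundamental form (the dimension of the first normal space) can be controlled: the relative nullity distribution and the argument of do Carmo--Lima show that the first normal spaces have constant rank $1$ and are parallel, so by the standard reduction-of-codimension theorem $f(M)$ lies in a totally geodesic $Q_c^{n+1}\subset Q_c^{n+p}$. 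I expect that the passage ``semi-definite $+$ compact $\Rightarrow$ definite somewhere $\Rightarrow$ first normal space has rank one everywhere'' is the technical heart here, and it will require care because semi-definiteness alone (without the ``definite at one point'' hypothesis of do Carmo--Lima) is all we are given; the compactness must be used to upgrade it.

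Next, working inside $Q_c^{n+1}$, I would prove the convexity and embedding statement. For $c>0$ one first shows $f(M)$ is contained in an open hemisphere: a compact hypersurface with a definite second fundamental form cannot wrap around the sphere, which one sees by considering the support-function-type argument with respect to the center of a smallest enclosing geodesic ball, exactly as in do Carmo--Warner. For $c<0$ the ambient hyperbolic space causes no such issue. In either case, after identifying $Q_c^{n+1}$ with (a hemisphere of) a hypersurface of a flat model space via the standard conformal/projective model, or more directly by using a Gauss-map/normal-exponential argument, semi-definiteness of the second fundamental form gives that the ``generalized Gauss map'' is a local diffeomorphism; a Hadamard-type covering argument (the normal exponential map from $f(M)$ to one side is a diffeomorphism onto a ball) then yields that $f$ is an embedding onto the boundary of a convex body $K\subset Q_c^{n+1}$. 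Since $\partial K$ is a compact embedded hypersurface, $M\cong\partial K$ is diffeomorphic to $S^n$.

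Finally, for rigidity, suppose $g:M^n\to Q_c^{n+p}$ is another isometric immersion. By the first part applied to $g$ (note $g$ has the same intrinsic curvature data but we must first argue $g$ is also non-totally-geodesic, which follows since $M\cong S^n$ is not intrinsically flat hence admits no totally geodesic immersion into $Q_c$), $g$ also reduces codimension to some $Q_c^{n+1}$ and embeds $M$ as a convex hypersurface. It then suffices to prove rigidity of convex hypersurfaces in $Q_c^{n+1}$. For $c<0$ this is where the do Carmo--Warner conjecture lies: I would use the model of $\h^{n+1}$ and a Beez--Killing/Allendoerfer type rigidity result (a hypersurface with at least two nonzero principal curvatures at each point is rigid, because the second fundamental form is algebraically determined by the curvature tensor via the Gauss equation), noting that convexity plus non-total-geodesy forces the rank of the shape operator to be $\ge 2$ on a dense set, and then promoting this to global congruence by a connectedness/analytic-continuation argument on the set of points where the hypersurface is ``rigid.'' The main obstacle I anticipate is precisely handling the points where the shape operator has rank exactly one (Beez--Killing rigidity fails there): the convexity already proven must be leveraged to show such points form a set that does not obstruct the global congruence, e.g. by showing the open set of rank-$\ge 2$ points is connected and dense and that the congruence extends by continuity across the rank-one locus.
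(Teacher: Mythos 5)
Your overall architecture (reduce to codimension one, then prove convexity and rigidity there) matches the paper's, but several of your key steps are not sound as stated. First, the convexity/embedding step: you claim that semi-definiteness of the second fundamental form makes the generalized Gauss map a local diffeomorphism. This is false --- semi-definiteness permits degenerate shape operators (e.g.\ a convex hypersurface containing a flat face), and the Hadamard covering argument you invoke requires definiteness everywhere. Handling the merely semi-definite case is precisely the content of the Sacksteder/Jonker theorems; the paper's proof composes \,$f$\, with the Beltrami (projective) map of a hemisphere or of \,$\h^{n+p}$\, --- which preserves semi-definiteness, totally geodesic points, and convexity (Proposition \ref{prop-beltramimaps}) --- and then cites Jonker's theorem in \,$\R^{n+p},$\, which delivers the reduction of codimension, the embedding, and the convexity all at once. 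Your separate reduction-of-codimension step has a related gap: the first normal space does not have ``constant rank one'' (it vanishes on \,$M_{\rm tot}$), and Proposition \ref{prop-parallel} only gives parallelism of \,$\mathscr{N}$\, off the set \,$M_c$\,; extending across \,$M_c-M_{\rm tot}$\, and across \,$M_{\rm tot}$\, requires the relative-nullity-foliation arguments of Section \ref{sec-proof3} together with the nonobvious fact that \,$M-M_{\rm tot}$\, is connected, which the paper extracts from Jonker's proof rather than proving directly.

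Second, the rigidity step rests on a false version of the Beez--Killing theorem: rigidity via the Gauss equation requires type number (rank of the shape operator) at least \emph{three} at every point, not two; rank-two points are exactly where pointwise rigidity fails (already for ovaloids in \,$\R^3,$\, which are only globally rigid), so they cannot be dispatched by a density or continuation argument. The paper's resolution of the do Carmo--Warner conjecture is not local at all: once the codimension is one and \,$M-M_{\rm tot}$\, is known not to disconnect \,$M,$\, it invokes Sacksteder's global rigidity theorem for \,$c=-1,$ $n\ge 3,$\, do Carmo--Warner's theorem for \,$c=1,$\, and Fomenko--Gajubov for the remaining case \,$c=-1,$ $n=2,$\, which your sketch does not cover. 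Your preliminary observations --- that compactness yields a point of definiteness (the paper gets this from a Dajczer--Gromoll nullity result rather than a touching sphere), and that for \,$c>0$\, one must first confine \,$f(M)$\, to an open hemisphere \`a la do Carmo--Warner --- are correct and do appear in the paper, but they do not substitute for the missing global machinery.
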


As is well known, the flat $n$-dimensional Clifford torus can  be embedded
into the hyperbolic space \,$\h^{2n}.$\,  Additionally, one can easily
obtain non-totally geodesic isometric immersions \,$f:S^n\rightarrow S^{2n+1}$\, whose codimension
cannot be reduced to one (see \cite{dajczer}, pg 76). So,  Theorem \ref{th-convexity} is no longer true if we
replace the condition on the second fundamental form by that of \,$M$\, having no sectional curvatures
less than \,$c.$\, However, \,$M$\, has this latter property if the immersion
\,$f:M^n\rightarrow Q_c^{n+p}$\, has semi-definite second fundamental form (see Proposition \ref{prop-sdsr}).

Due to the  Bonnet-Myers Theorem and the above considerations, we can  replace compactness by completeness
in the statement of Theorem \ref{th-convexity} if \,$c>0.$\,
Nevertheless, for \,$c<0,$\, we cannot expect to obtain a Hadamard-Stoker type theorem if we assume that
\,$M$\, is complete and noncompact. Indeed, there are complete immersions  in hyperbolic space which are not embeddings and whose
second fundamental form is semi-definite (see \cite{spivak}, pg 124).
Thus, in this context, to ensure that the immersion is actually an embedding, we
need stronger conditions on the second fundamental form.

R. J. Currier \cite{currier} obtained a Hadamard-Stoker type theorem for complete hypersurfaces in hyperbolic space which are
locally supported by horospheres,
that is,  the eigenvalues of their shape operators are all greater than or equal to \,$1.$\,
Here, we consider the analogous problem in arbitrary codimension and obtain the following result.

\begin{theorem} \label{th-hconvexity}
Let \,$f:M^n\rightarrow\h^{n+p}$\, be an isometric immersion
of an orientable complete connected Riemannian manifold \,$M^n$\, into the hyperbolic space \,$\h^{n+p}.$\,
Assume that there is an orthonormal frame \,$\{\xi_1\,, \dots ,\xi_p\}$\, in \,$TM^\perp$ such that
all the eigenvalues of the shape operators \,$A_{\xi_i}$\,  are greater than or equal to \,$1.$\,
Then, \,$f$\, admits a reduction of codimension to one, \,$f:M^n\rightarrow\h^{n+1}.$\,
As a consequence, \,$f$\, is and embedding, \,$f(M)$\, is the boundary of a convex set in \,$\h^{n+1},$\, and
\,$M$\, is either diffeomorphic to \,$S^n$\,  or to \,$\R^n.$\,
Moreover, \,$f$\, is rigid and, in fact,
\,$f(M)$\, is a horosphere of \,$\h^{n+1}$ if \,$M$\, is not compact.
\end{theorem}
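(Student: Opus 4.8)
The plan is to reduce the problem to codimension one and then invoke Currier's theorem together with the known rigidity/classification for horospherically supported hypersurfaces. The central claim to establish is the reduction of codimension; everything else follows from cited results.

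First I would show that the first normal spaces $N_1(x) = \mathrm{span}\{\alpha(X,Y) : X,Y \in T_xM\}$ have constant dimension one and form a parallel subbundle of the normal bundle, so that the classical reduction-of-codimension theorem (Erbacher/Dajczer, valid in any space form) applies. To get $\dim N_1 = 1$ everywhere, I would argue pointwise: fix $x$ and the frame $\{\xi_1,\dots,\xi_p\}$. Each $A_{\xi_i}$ is positive definite (eigenvalues $\geq 1 > 0$), hence each $\xi_i$-component of the second fundamental form is a definite symmetric bilinear form. The key linear-algebra observation is that if $A$ and $B$ are two positive-definite symmetric operators on $T_xM$ whose associated scalar second fundamental forms are linearly independent, one can produce a tangent plane on which the sectional curvature drops below $-1$, contradicting Proposition \ref{prop-sdsr} (which gives all sectional curvatures $\geq -1$ in the hyperbolic case). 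Concretely, using the Gauss equation $K(X,Y) = -1 + \sum_i \big(\langle A_{\xi_i}X,X\rangle\langle A_{\xi_i}Y,Y\rangle - \langle A_{\xi_i}X,Y\rangle^2\big)$, I want to show the curvature term can be made negative: if the forms $II_i = \langle A_{\xi_i}\cdot,\cdot\rangle$ do not all agree up to scaling, then after simultaneous adjustments there exist unit orthogonal $X,Y$ and indices $i,j$ making the bracketed sum negative. This forces all the $II_i$ to be pairwise proportional, hence $\dim N_1(x)=1$.

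Next I would verify that the resulting line bundle $N_1$ is parallel in the normal connection. This is where I expect the main obstacle to lie: one must rule out the vanishing of $N_1$ (i.e., totally geodesic points) — but here the eigenvalue condition $\geq 1$ guarantees $\alpha$ never vanishes, so $N_1$ is genuinely a line bundle everywhere, and smoothness of the unit section follows. Parallelism of $N_1$ then comes from the Codazzi equation: differentiating the relation $\alpha(X,Y) = h(X,Y)\,\eta$ for a local unit section $\eta$ of $N_1$ and comparing with Codazzi shows that $\nabla^\perp_X\eta$ has no component outside $N_1$, hence $N_1$ is parallel. Invoking the reduction-of-codimension theorem, $f$ takes values in a totally geodesic $\h^{n+1}\subset\h^{n+p}$.

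Finally, on $\h^{n+1}$ the hypersurface $f$ has a shape operator (with respect to a unit normal in $N_1$) whose eigenvalues are $\geq 1$ — that is, $f$ is locally supported by horospheres — so Currier's theorem \cite{currier} applies directly: $f$ is an embedding, $f(M)$ bounds a convex set, $M$ is diffeomorphic to $S^n$ or $\R^n$, and in the noncompact case $f(M)$ is a horosphere. Rigidity in the compact (ovaloid) case follows from the do Carmo–Warner/Sacksteder-type rigidity already invoked for Theorem \ref{th-convexity}; in the noncompact case $f(M)$ being a horosphere makes rigidity immediate since all horospheres are congruent. I would close by noting that the orientability hypothesis is what allows the global choice of unit normal needed to state the eigenvalue condition consistently and to apply Currier's result.
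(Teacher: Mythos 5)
Your overall strategy --- show that the first normal spaces are one-dimensional and form a parallel line bundle, reduce the codimension via Proposition \ref{prop-reductioncodimension}, then quote Currier --- is the same as the paper's (which obtains $(1\,;1)$-regularity from Proposition \ref{prop-sdsr} and parallelism from Proposition \ref{prop-parallel} after noting $M_c=\emptyset$). The problem is that your ``key linear-algebra observation'' is false, and it is exactly the step carrying all the weight. If every $A_{\xi_i}$ is positive semi-definite, then for \emph{any} linearly independent $X,Y$ each Gauss summand
\[
\langle A_{\xi_i}X,X\rangle\langle A_{\xi_i}Y,Y\rangle-\langle A_{\xi_i}X,Y\rangle^2
\]
is nonnegative by Cauchy--Schwarz for the semi-inner product $\langle A_{\xi_i}\cdot,\cdot\rangle$, so the bracketed sum can never be made negative and $K\ge -1$ holds automatically, however the scalar forms $II_i$ sit relative to one another. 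Hence no contradiction with $K\ge-1$ can be extracted from $\dim\mathscr{N}(x)\ge 2$. A two-line counterexample to your claim: $A_{\xi_1}=\mathrm{diag}(1,2)$ and $A_{\xi_2}=I$ on a $2$-plane are positive definite with eigenvalues $\ge 1$ and linearly independent, yet every curvature term above is strictly positive.

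The gap is not cosmetic, because the pointwise statement you are after actually fails under the stated hypotheses. Take a non-round ellipsoid $M^2$ with all Euclidean principal curvatures $\ge 1$ inside a horosphere $H^3\subset\h^4$ (recall $H$ is intrinsically flat and totally umbilic with shape operator the identity). With $\xi_1$ the unit normal of $M$ in $H$ and $\xi_2$ the unit normal of $H$, one gets an orthonormal normal frame with $A_{\xi_1}$ equal to the Euclidean shape operator of the ellipsoid and $A_{\xi_2}=I$; all hypotheses of the theorem are satisfied, yet at non-umbilic points $\mathscr{N}(x)$ is two-dimensional, and $M$ lies in no totally geodesic $\h^3$ (totally geodesic hypersurfaces meet a horosphere only in affine planes or round spheres of the flat metric). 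The paper's own argument does not go through a curvature contradiction: it passes through semi-definiteness of $\alpha_f$ in \emph{every} normal direction, the hypothesis of Proposition \ref{prop-sdsr}, which is a strictly stronger pointwise condition than the eigenvalue condition on the given frame and is what really forces $\dim\mathscr{N}=1$. To repair your proof you would have to either assume that stronger condition or produce a genuinely global argument; the Codazzi/parallelism step and the appeal to Currier's theorem in your proposal are fine once $\dim\mathscr{N}=1$ is in hand.
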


Essentially, the proof of  Theorem \ref{th-convexity}  will be carried out
by means of the so called Beltrami maps, which were used
by do Carmo and Warner as well. These maps are
geodesic diffeomorphisms from either
an open hemisphere or the hyperbolic space to the
Euclidean space of same dimension.
By relying on their properties, one can  reduce certain problems
in spherical or hyperbolic spaces
to  analogous ones set in Euclidean space.

We also benefit from Beltrami maps to obtain an existence result for
hypersurfaces in space forms with prescribed boundary and vanishing Gauss-Kronecker curvature
(Corollary \ref{cor-prescribedboundary}), as well as to establish two results regarding the convex hull of bounded
domains of submanifolds of space forms (corollaries \ref{cor-interiorCHP} and \ref{cor-exteriorCHP}).
By the same token, we obtain the following  extension of a classical theorem due to Hadamard \cite{hadamard1}.

\begin{theorem}  \label{th-hypersurface}
Let \,$f:M^n\rightarrow\mathcal{H}^{n+1}$\, be a  compact connected hypersurface, where \,$\mathcal{H}^{n+1}$\, is either
the open hemisphere of \,$S^{n+1}\subset\R^{n+2}$\, centered at \,$e=(0,0,\dots ,1)$\,
or the hyperbolic space \,$\h^{n+1}\subset\mathbb{L}^{n+2}=(\R^{n+2}, \langle \,,\, \rangle_{\scriptscriptstyle{\mathbb{L}}}).$\,
Then, the following assertions are equivalent:
\begin{itemize}[parsep=1ex]
\item[\rm i)] The second fundamental form of \,$f$\, is definite everywhere.

\item[\rm ii)] The Gauss-Kronecker curvature of \,$f$\, is nowhere vanishing.

\item[\rm iii)] \,$M$\, is orientable and, for a unit normal field \,$\xi$\, defined on \,$M,$\,
the map
$$
\begin{array}{cccc}
\psi: & M^n & \rightarrow & S^n \\

      & x   & \mapsto     &  \frac{\xi(x)-\langle\xi(x),e\rangle e}{\sqrt{1-\langle\xi(x),e\rangle^2}}
  \end{array}
$$
is a well-defined diffeomorphism, where
\,$S^n$\, stands for the $n$-dimensional unit sphere of
the Euclidean orthogonal complement  of \,$e$\, in \,$\R^{n+2}.$
\end{itemize}
Furthermore, any of the above conditions implies that \,$f$\, is rigid and embeds \,$M$\, onto the
boundary of a compact convex set in \,$\mathcal{H}^{n+1}.$\,
\end{theorem}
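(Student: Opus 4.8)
The plan is to pass to a Euclidean hypersurface by composing $f$ with a Beltrami map and then to invoke the classical Euclidean version of Hadamard's theorem. Let $\beta\colon\mathcal H^{n+1}\to\R^{n+1}$ denote the Beltrami map, that is, the central projection from the origin of the ambient $\R^{n+2}$ onto the affine hyperplane $\{x_{n+2}=1\}$, which we identify with $\R^{n+1}$; it is a diffeomorphism onto $\R^{n+1}$ in the spherical case and onto the open unit ball in the hyperbolic case, and it carries geodesics of $\mathcal H^{n+1}$ to straight lines and totally geodesic submanifolds to affine subspaces. Set $\tilde f:=\beta\circ f\colon M^n\to\R^{n+1}$, a compact connected Euclidean hypersurface. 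I would rely on the classical Hadamard theorem \cite{hadamard1}, in the form valid for arbitrary $n$: for a compact connected hypersurface in $\R^{n+1}$, the three conditions ``the second fundamental form is definite everywhere'', ``the Gauss--Kronecker curvature is nowhere zero'', and ``$M$ is orientable and its Gauss map $M\to S^n$ is a diffeomorphism'' are mutually equivalent, and each of them forces the hypersurface to embed $M$ onto the boundary of a compact convex body.

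The key point that makes the transfer work is that a geodesic diffeomorphism preserves, at each point, the asymptotic directions and the index of relative nullity of a hypersurface: a direction $v\in T_xM$ is asymptotic for $f$ (respectively, lies in the kernel of a shape operator of $f$) exactly when the geodesic of $\mathcal H^{n+1}$ issuing from $f(x)$ with velocity $df(v)$ has the corresponding order of contact with $f(M)$ at $f(x)$, and this is invariant under $\beta$, which sends that geodesic to a line and $f(M)$ to $\tilde f(M)$. Hence the second fundamental form of $f$ is definite at a point if and only if that of $\tilde f$ is, and the Gauss--Kronecker curvature of $f$ vanishes at a point if and only if that of $\tilde f$ does; combined with the Euclidean Hadamard theorem applied to $\tilde f$, this yields the equivalence i) $\Leftrightarrow$ ii).

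For iii), I would compute the Euclidean Gauss map $\tilde\nu\colon M\to S^n$ of $\tilde f$ and identify it with $\pm\psi$. Let $W_x\subset\R^{n+2}$ be the linear span of $f(x)$ and $T_{f(x)}f(M)$; it is precisely the ($\langle\,,\,\rangle$- or $\langle\,,\,\rangle_{\mathbb{L}}$-) orthogonal complement of $\xi(x)$. The totally geodesic hyperplane $W_x\cap\mathcal H^{n+1}$, tangent to $f(M)$ at $f(x)$, is carried by $\beta$ onto the affine hyperplane $W_x\cap\{x_{n+2}=1\}$, which is therefore the affine tangent hyperplane of $\tilde f(M)$ at $\tilde f(x):=\beta(f(x))$; a direct computation then shows that its Euclidean unit normal, viewed in $S^n\subset e^\perp$, equals $\pm\psi(x)$, so $\tilde\nu=\pm\psi$. (Along the way one sees that $\psi$ is automatically well defined: $\xi(x)=\pm e$ in the spherical case would place $f(x)$ outside the open hemisphere, and $\xi(x)\in\R e$ in the hyperbolic case is incompatible with $\xi(x)$ being a spacelike unit vector.) Thus iii) says exactly that $M$ is orientable and the Gauss map of $\tilde f$ is a diffeomorphism onto $S^n$, which by the Euclidean Hadamard theorem is equivalent to i). Finally, under any of i)--iii) the immersion $f$ has definite second fundamental form; transferring i) to $\tilde f$ and applying the Euclidean Hadamard theorem, $\tilde f$ embeds $M$ onto the boundary of a compact convex body $\tilde K\subset\R^{n+1}$, so $f$ embeds $M$ onto $\partial\bigl(\beta^{-1}(\tilde K)\bigr)$, the boundary of a compact convex set of $\mathcal H^{n+1}$, and $M$ is diffeomorphic to $S^n$; rigidity of $f$ follows from Theorem~\ref{th-convexity}, applicable since the (definite) second fundamental form of $f$ is in particular semi-definite and non-totally geodesic.

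The step I expect to be the main obstacle is the invariance, under the Beltrami map, of the projective data of the second fundamental form — its asymptotic cone, its index of relative nullity, and hence its definiteness and the zero set of the Gauss--Kronecker curvature — which underlies the whole argument and needs a careful treatment. The explicit identification of $\psi$ with $\tilde\nu$ is then routine, but must be handled with some care in the hyperbolic case, where one must track the interplay between the Lorentzian and the Euclidean structures on $\R^{n+2}$.
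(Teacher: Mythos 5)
Your proposal follows essentially the same route as the paper: compose with the Beltrami map, transfer definiteness of the second fundamental form and non-vanishing of the Gauss--Kronecker curvature to the Euclidean hypersurface $\bar f=\varphi\circ f$, identify $\psi$ with the Euclidean Gauss map of $\bar f$, apply Hadamard's theorem in $\R^{n+1}$, and deduce rigidity from Theorem~\ref{th-convexity}. The invariance step you single out as the main obstacle is exactly what Proposition~\ref{prop-beltramimaps} and Remark~\ref{rem-curvature} already provide, via the identity $\langle\alpha_{\bar f}(\overline{X},\overline{X}),\overline{\xi}\rangle=\phi^{2}\langle\alpha_f(X,X),\xi\rangle$, which is sharper than the order-of-contact heuristic you sketch (it preserves the full signature and rank, not merely the asymptotic cone).
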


It is easily seen that the standard minimal immersion of the two-dimensional Clifford torus
in \,$S^3$\, has non-zero Gauss-Kronecker curvature, which shows that
Theorem \ref{th-hypersurface} is not valid for general
compact hypersurfaces \,$f:M^n\rightarrow S^{n+1}.$

The second part of our paper is devoted to the problem of reducing the  codimension of certain isometric immersions
\,$f:M^n\rightarrow Q_c^{n+p},$\, which we will call $(1\,;1)$-\emph{semi-regular}. Such an immersion is  characterized
by the fact that, at each non-totally geodesic point,
its first normal space (that is, the space generated by its second fundamental form)
has constant dimension equal to \,$1.$\,  If \,$f$\, is $(1\,;1)$-{semi-regular} and
has no totally geodesic points,  it is called $(1\,;1)$-regular.

Our last result, as quoted below, extends the main results of
\cite{rodriguez-tribuzy} (theorems 1, 2, and 3) to the more general
context of $(1\,;1)$-semi-regular isometric immersions. There, the authors  studied  reduction of codimension of
$(1\,;1)$-regular isometric immersions into space forms.

\begin{theorem}  \label{th-semiregular}
Let \,$M^n$\, be a complete (compact, if \,$c\le 0$) connected Riemannian manifold, and \,$f:M^n\rightarrow Q_c^{n+p}$\,
an  $(1\,;1)$-semi-regular
isometric immersion of \,$M^n$\, into the space form \,$Q_c^{n+p}.$\,
Then, the immersion \,$f$\, admits a reduction of codimension to one
whenever  its set of totally geodesic points does not disconnect  \,$M.$\,
As a consequence, the following hold:

\begin{itemize}[parsep=1ex]
\item[{\rm i)}] $f$\, is an embedding and
\,$f(M)$\, bounds a compact convex set of \,$Q_c^{n+1},$\, provided the Ricci curvature of \,$M$\,
is nowhere less than \,$c.$

\item[{\rm ii)}] $f$\, is rigid if either \,$c>0$\, and \,$n\ge 4$\, or \,$c\le 0$\, and \,$n\ge 3.$
\end{itemize}
\end{theorem}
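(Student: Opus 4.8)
The plan is to reduce everything to the analysis of the first normal bundle $N_1$ of $f$. Since $f$ is $(1\,;1)$-semi-regular, over the open dense set $U\subset M$ of non-totally geodesic points, $N_1$ is a line subbundle of $TM^\perp$. The core of the argument is to show that $N_1$ extends to a smooth parallel line subbundle of $TM^\perp$ over all of $M$, which by the classical reduction-of-codimension criterion (Erbacher--type theorem) will immediately give a totally geodesic $Q_c^{n+1}\subset Q_c^{n+p}$ containing $f(M)$.

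First I would work locally on $U$. The second fundamental form there takes values in $N_1$; writing $\alpha = \eta\otimes\xi$ for a local unit section $\xi$ of $N_1$ and a symmetric $(0,2)$-tensor $\eta$, the Codazzi equation for $\alpha$ forces, after projecting onto $N_1^\perp$, a relation showing that $\nabla^\perp\xi$ has no component orthogonal to $N_1$ at points where $\eta$ has rank $\ge 2$; here one uses that the ambient curvature tensor of $Q_c^{n+p}$ contributes nothing normal (it is a constant-curvature space), exactly as in \cite{rodriguez-tribuzy}. This shows $N_1$ is parallel in $TM^\perp$ over $U$, at least away from a lower-rank stratum, and a continuity/connectedness argument promotes this to all of $U$. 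The genuinely new point — and the main obstacle — is extending the line bundle $N_1$ and its parallelism \emph{across} the totally geodesic set $T = M\setminus U$. This is where the hypothesis that $T$ does not disconnect $M$ enters: on each component of $U$ the parallel line field determines, via the reduction criterion applied componentwise, a fixed totally geodesic $Q_c^{n+1}$; since $f(U)$ is dense in $f(M)$ and $U$ is connected through $M$ (because $T$ does not disconnect it), all these local $Q_c^{n+1}$'s coincide, and then $f(M)\subset Q_c^{n+1}$ by continuity. The delicate part is the global patching: one must rule out that different components of $U$ give different totally geodesic submanifolds, and the non-disconnecting hypothesis is precisely what makes the ``analytic continuation'' of $N_1$ through $T$ well-defined.

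Once the reduction $f:M^n\to Q_c^{n+1}$ is established, assertions (i) and (ii) follow from results already available. For (i): if $\mathrm{Ric}\ge c$, then in codimension one the Gauss equation forces the second fundamental form to be semi-definite at every point (a rank argument: a shape operator with eigenvalues of both signs would produce a $2$-plane with sectional curvature $<c$, hence $\mathrm{Ric}<c$ along it — one must be slightly careful here, but this is standard for hypersurfaces, cf. the discussion preceding Theorem~\ref{th-convexity}); moreover $f$ cannot be totally geodesic since it has a non-totally-geodesic point by construction. If $c>0$ the manifold is compact by Bonnet--Myers, and if $c\le 0$ it is compact by hypothesis, so Theorem~\ref{th-convexity} applies directly and yields the embedding, convexity, and the diffeomorphism with a sphere. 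For (ii): having reduced to a hypersurface with the stated dimension bounds, rigidity follows from the Beez--Killing / do Carmo--Warner style rigidity theorems — in codimension one a hypersurface of a space form whose second fundamental form has rank $\ge 3$ at every point (which holds, after the reduction, on the dense open set, and by the dimension hypotheses $n\ge 4$ for $c>0$ or $n\ge 3$ for $c\le 0$ combined with the convexity from (i) forcing high rank) is rigid; one invokes the rigidity statements of \cite{docarmo-warner} and the classical results for $\R^{n+1}$ and $\h^{n+1}$.

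The step I expect to be the main obstacle is the global one: showing that the locally-defined parallel line bundle $N_1$ on the open dense set $U$, together with the totally geodesic submanifold it determines on each component, glues to a single $Q_c^{n+1}$ containing all of $f(M)$, using only that $T=M\setminus U$ does not disconnect $M$. This requires a careful limiting argument at points of $T$ — relating the first normal spaces at nearby points of $U$ approaching a totally geodesic point — plus a connectedness argument to propagate the identification of the totally geodesic submanifold across the whole manifold. The codimension-one reduction criterion itself, the Gauss-equation consequences, and the invocation of Theorem~\ref{th-convexity} and the known rigidity theorems are comparatively routine once this hurdle is cleared.
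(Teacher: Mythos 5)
Your outline (parallelism of the first normal bundle plus the reduction-of-codimension criterion, then known theorems for (i) and (ii)) matches the paper's skeleton, but the two steps you yourself flag as delicate contain genuine gaps, and your treatment of (ii) uses the wrong tool. First, the Codazzi argument only yields parallelism of \,$\mathscr{N}$\, at points where the shape operator has rank \,$\ge 2$\,, i.e.\ on \,$M-M_c$\, (the set where some sectional curvature differs from \,$c$). On \,$M_c-M_{\rm tot}$\, the rank is exactly \,$1$\, (Remark \ref{rem-nullity}), and this set can have \emph{nonempty interior}, so no ``continuity/connectedness argument'' promotes parallelism there. The paper instead observes that \,$\nu=n-1$\, on \,${\rm int}(M_c-M_{\rm tot})$\,, invokes Ferus's theorem (Proposition \ref{prop-incompleteleaves}) to force every nullity geodesic to reach the boundary of that set at a point \,$y$\, with \,$\nu(y)=n-1$\, (hence \,$y\notin M_{\rm tot}$\,, so \,$y\in{\rm cl}(M-M_c)$\, where \,$\mathscr{N}$\, is already parallel), and then propagates parallelism back along the leaf via property {\small{\bf (P2)}}. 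The same Ferus argument is needed just to show \,$M-M_c\ne\emptyset$\,, without which your Codazzi step gives nothing. Note that this is exactly where the completeness/compactness hypothesis enters; your proposal never uses it in the reduction step, which is a sign something is missing. Second, your passage across \,$M_{\rm tot}$\, assumes \,$f(U)$\, is dense in \,$f(M)$\,, but \,$M_{\rm tot}$\, may have nonempty interior. The paper handles this with Obata's theorem: the Gauss map is constant on each component \,$C$\, of \,$M_{\rm tot}$\,, so \,$f(C)$\, lies in a totally geodesic \,$Q_c^n$\, which, by tangency at \,$\partial C$\,, sits inside the \,$Q_c^{n+1}$\, obtained from \,${\rm cl}(M-M_{\rm tot})$\,. (Since \,$M-M_{\rm tot}$\, is connected by hypothesis, there is only one such \,$Q_c^{n+1}$\, to begin with; the ``gluing of different components'' you worry about is not the issue.)

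For (ii), the Beez--Killing route requires the second fundamental form to have rank \,$\ge 3$\, \emph{at every point}, which fails at totally geodesic points, and you cannot import ``convexity from (i)'' because (ii) does not assume \,${\rm Ric}\ge c$\,. The correct tool is Sacksteder's Rigidity Theorem as stated in the paper, whose only structural hypothesis is precisely that the totally geodesic set does not disconnect \,$M$\,; it applies immediately once the codimension is one. For (i), your semi-definiteness claim is correct but the justification is loose: a single \,$2$-plane of curvature \,$<c$\, does not force \,${\rm Ric}<c$\,. The clean argument uses the identity \,${\rm Ric}(X)=c+\tfrac{1}{n-1}\langle\alpha_f(X,X),H\rangle-\tfrac{1}{n-1}\sum_i\|\alpha_f(X,X_i)\|^2$\, to show that either \,$H(x)=0$\, (whence \,$x\in M_{\rm tot}$) or \,$\langle\alpha_f(X,X),H\rangle\ge 0$\, for all \,$X$\,; one then concludes via Jonker's Theorem when \,$c=0$\, and Theorem \ref{th-convexity} when \,$c\ne 0$\, (your appeal to Theorem \ref{th-convexity} alone does not cover \,$c=0$\,, which the statement allows).
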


The paper is organized as follows. In Section \ref{sec-preliminaries}, we introduce some notation and basic results on
isometric immersions.
In Section \ref{sec-semiregular}, we discuss on semi-regular  isometric immersions and  their elementary properties.
In Section \ref{sec-beltramimaps}, we introduce the Beltrami maps and establish a result (Proposition \ref{prop-beltramimaps})
that will lead to the proofs of our theorems. In Section  \ref{sec-proof1and2}, we prove the theorems from  \ref{th-convexity} to
\ref{th-hypersurface}, and  finally, in Section \ref{sec-proof3}, we prove Theorem \ref{th-semiregular}.

\vtt

\noindent
{\bf Acknowledgments.}  We would like to acknowledge Professor
Manfredo do Carmo for his inspiring teaching and encouragement. The second author
is grateful to Marcos Dajczer, Luis Florit and Ruy Tojeiro for helpful conversations.

\section{Preliminaries} \label{sec-preliminaries}

Let us fix some notation and recall some classical results on isometric immersions which will be used throughout the paper.
For details and proofs we refer to \cite{dajczer}.

Unless otherwise stated ({\it{e.g.}}, Corollary \ref{cor-prescribedboundary}), all Riemannian manifolds we
consider here are  assumed to be \,$C^\infty$\, and of dimension \,$n\ge 2.$

We shall  denote by \,$Q_c^n$\, the $n$-dimensional space form whose sectional curvatures are constant and equal to \,$c\in\{0, 1, -1\},$\, that is,
\,$Q_0^n$\, is the Euclidean space \,$\R^n,$\, \,$Q_1^n$\, the unit sphere \,$S^n,$\, and
\,$Q_{\scriptscriptstyle{-1}}^{n}$\, is   the hyperbolic space \,$\mathbb{H}^n.$

Let \,$M^n$\, and \,$\tilde{M}^{n+p}$\, be Riemannian manifolds of dimensions \,$n\ge 2$\, and
\,$n+p>n,$\, respectively. Given an isometric immersion
$$f:M^n\rightarrow\tilde{M}^{n+p},$$
we will  write \,$TM$\, and \,$TM^\perp$\, for its tangent bundle and normal
bundle, respectively, and \,$\alpha_f:TM\times TM\rightarrow TM^\perp$\, for its second fundamental form, that is,
$$
\alpha_f(X,Y)=\widetilde{\nabla}_XY-\nabla_XY,
$$
where \,$\widetilde{\nabla}$\, and \,$\nabla$\, denote, respectively, the Riemannian connections of \,$\tilde{M}$\, and \,$M.$

Given \,$\xi\in TM^\perp,$\, we define the (self-adjoint) operator \,$A_\xi:TM\rightarrow TM$\, by
$$
A_\xi X=-(\text{tangential component of} \,\,\, \widetilde{\nabla}_X\xi)
$$
and call it the \emph{shape operator} of \,$f$\, in the normal direction \,$\xi.$\, It is easily seen that
$$
\langle A_\xi X,Y\rangle=\langle\alpha_f(X,Y),\xi\rangle \,\,\,\, \forall \,X,Y\in TM, \,\, \xi\in TM^\perp,
$$
where \,$\langle \,\,,\, \rangle$\, stands for the Riemannian metric in both \,$M$\, and \,$\tilde{M}.$

One says that the second fundamental form \,$\alpha_f$\, of \,$f$\, is \emph{semi-definite} if, for all \,$\xi\in TM^\perp,$\,
the 2-form
$$(X,Y)\in TM\times TM\mapsto \langle\alpha_f(X,Y),\xi\rangle$$
is semi-definite. Clearly, this condition
is equivalent to that of all the nonzero eigenvalues of the shape operator \,$A_\xi$\, having the same sign.

The set of totally geodesic points of an isometric immersion
\,$f:M^n\rightarrow\tilde{M}^{n+p},$\, that is, those at which the second fundamental form vanishes, will be denoted by \,$M_{\rm tot}$\,.
If, in addition,  \,$\tilde{M}$\, has constant curvature \,$c,$\, we will denote by \,$M_c$\, the set of points \,$x$\, of \,$M$\, whose sectional curvatures
\,$K_{\scriptscriptstyle{M}}(X,Y), \, X,Y\in T_xM,$\, are all equal to \,$c.$\,
Observe that, by \emph{Gauss equation}
$$
K_{\scriptscriptstyle{M}}(X,Y)=c+\langle\alpha_f(X,X),\alpha_f(Y,Y)\rangle-\|\alpha_f(X,Y)\|^2,
$$
which is valid for any orthonormal vectors \,$X, Y\in TM,$\, we have that
$$
M_{\rm tot}\subset M_c\subset M.
$$

We  define the \emph{subspace of relative nullity} of \,$f$\, at
\,$x\in M$\, as the vector subspace \,$\Delta(x)$\, of \,$T_xM$\, given by
$$
\Delta(x)=\{X\in T_xM \,;\, \alpha_f(X,Y)=0\, \forall Y\in T_xM\}.
$$

The dimension of \,$\Delta(x)$\, is called the \emph{index of relative nullity} of \,$f$\, at \,$x,$\, and is
denoted by \,$\nu(x).$\, We also define the \emph{index of minimum relative nullity} of \,$f$\, by
$$
\nu_{\min}=\min_{x\in M}\nu(x).
$$

Finally, let us recall that an isometric immersion \,$f:M^n\rightarrow Q_c^{n+p}$\, is called \emph{rigid} if, for any isometric
immersion \,$g:M^n\rightarrow Q_c^{n+p},$\, there is an ambient isometry
\,$\Phi:Q^{n+p}\rightarrow Q^{n+p}$\, such that \,$g=\Phi\circ f.$\,

Most of our results on rigidity here will follow from the following theorem, due to Sacksteder \cite{sacksteder2}.

\begin{sacks}
Let \,$f:M^n\rightarrow Q_c^{n+1}$\, be an isometric immersion of a compact (resp. complete) Riemannian manifold with
\,$n\ge 3$\, and \,$c\le 0$ (resp. \,$n\ge 4$\, and \,$c>0$). Then,  \,$f$\, is rigid, provided its
set of totally geodesic points does not disconnect \,$M.$
\end{sacks}

\section{Semi-Regular Isometric Immersions} \label{sec-semiregular}

We say that an isometric immersion \,$f:M^n\rightarrow{Q}_c^{n+p}$\, admits a \emph{reduction of codimension} to \,$q<p,$\, if there is
a totally geodesic submanifold \,$Q_c^{n+q}\subset Q_c^{n+p}$\, such that \,$f(M)\subset Q_c^{n+q}.$\, The immersion  \,$f$\, is also said to be
\,$(1\,;q)$-\emph{regular} if, for
all \,$x\in M,$\, the \emph{first normal space} of \,$f$\, at \,$x,$
$$
\mathscr{N}(x):={\rm span}\{\alpha_f(X,Y) \,;\, X,Y\in T_xM\},
$$
has constant dimension \,$q.$\, In this case,
$$\mathscr{N}:=\bigcup_{x\in M}\mathscr{N}(x)$$
is a subbundle of  \,$TM^\perp$\, which will be called
the \emph{first normal bundle} of \,$f.$\, $\mathscr{N}$\, is said to
be \emph{parallel} if, at any point \,$x\in M,$\, one has
$$\nabla_X^\perp\xi\in\mathscr{N}(x) \,\,\,\, \forall\, X\in T_xM, \,\, \xi\in\mathscr{N}(x),$$
where \,$\nabla^\perp$\, stands for the normal connection of \,$f.$

For future reference, let us quote a  standard result on reduction of codimension of isometric immersions.

\begin{proposition} \label{prop-reductioncodimension}
Let  \,$f:M^n\rightarrow{Q}_c^{n+p}$\, be a connected
\,$(1\,;q)$-regular isometric immersion whose first normal bundle \,$\mathscr{N}$\, is parallel.
Then, \,$f$\, admits a reduction of codimension to \,$q.$
\end{proposition}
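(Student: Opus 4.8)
The plan is to use the parallelism of $\mathscr{N}$ to show that the orthogonal complement $\mathscr{N}^\perp$ inside $TM^\perp$ is a parallel subbundle which is invariant under all shape operators in a strong sense, and then to produce a fixed totally geodesic $Q_c^{n+q}$ containing $f(M)$ by a connectedness/integration argument. First I would observe that, since $\mathscr{N}$ is parallel with respect to $\nabla^\perp$, its orthogonal complement $\mathscr{N}^\perp$ is also $\nabla^\perp$-parallel: for $\eta\in\Gamma(\mathscr{N}^\perp)$ and $\xi\in\Gamma(\mathscr{N})$ one has $\langle\nabla_X^\perp\eta,\xi\rangle=-\langle\eta,\nabla_X^\perp\xi\rangle=0$ because $\nabla_X^\perp\xi\in\mathscr{N}$. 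Moreover, by the very definition of the first normal space, $\langle\alpha_f(X,Y),\eta\rangle=0$ for all $X,Y\in T_xM$ and all $\eta\in\mathscr{N}(x)^\perp$, so the shape operator $A_\eta$ vanishes identically whenever $\eta\in\mathscr{N}^\perp$.

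The next step is to upgrade these two facts into a statement about the ambient connection $\widetilde\nabla$. Pick a local orthonormal frame $\{\eta_1,\dots,\eta_{p-q}\}$ of $\mathscr{N}^\perp$; I want to show that the subbundle $\mathscr{N}^\perp$ is $\widetilde\nabla$-parallel along $M$, i.e. that $\widetilde\nabla_X\eta_j$ is a section of $\mathscr{N}^\perp$ for every $X\in TM$. Using the Weingarten formula $\widetilde\nabla_X\eta_j=-A_{\eta_j}X+\nabla_X^\perp\eta_j$, the first term vanishes since $A_{\eta_j}\equiv 0$, and the second lies in $\mathscr{N}^\perp$ by the parallelism established above. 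Hence $\widetilde\nabla_X\eta_j\in\Gamma(\mathscr{N}^\perp)$, so $\mathscr{N}^\perp$ is a parallel subbundle of $f^*T Q_c^{n+p}$ along $M$.

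Finally I would fix a point $x_0\in M$ and let $V=T_{x_0}M\oplus\mathscr{N}(x_0)$, an $(n+q)$-dimensional subspace of $T_{f(x_0)}Q_c^{n+p}$; in the space-form model one associates to $V$ (together with the point $f(x_0)$) a unique totally geodesic $Q_c^{n+q}\subset Q_c^{n+p}$ passing through $f(x_0)$ with that tangent space. To see $f(M)\subset Q_c^{n+q}$, I would show that the $(n+q)$-plane distribution $x\mapsto T_xM\oplus\mathscr{N}(x)$ — equivalently its complement $\mathscr{N}^\perp$ — is parallel along every curve in $M$ and that $f$ is ``totally geodesic'' relative to this larger ambient submanifold: concretely, for any curve $\gamma$ in $M$, parallel transport in $Q_c^{n+p}$ along $f\circ\gamma$ preserves $\mathscr{N}^\perp$, hence preserves $\mathscr{N}\oplus TM$; combined with the fact that $\alpha_f$ takes values in $\mathscr{N}$, this forces $f\circ\gamma$ to stay tangent to the fixed $Q_c^{n+q}$, and integrating (using that $Q_c^{n+q}$ is totally geodesic and that $M$ is connected) gives $f(M)\subset Q_c^{n+q}$. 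The main obstacle is this last integration step: making precise that a parallel ambient subbundle along an immersed submanifold, containing the tangent spaces and closed under $\alpha_f$, actually localizes the image inside a fixed totally geodesic submanifold. This is where the constant curvature of $Q_c^{n+p}$ is essential, and it can be handled either by the standard ODE argument in the respective model space ($\R^{n+p}$, $S^{n+p}\subset\R^{n+p+1}$, or $\h^{n+p}\subset\mathbb{L}^{n+p+1}$), showing the position vector of $f$ stays in a fixed linear subspace, or by invoking the general reduction-of-codimension criterion for submanifolds of space forms; I would carry it out in the model, treating the three cases $c=0,1,-1$ uniformly via the flat connection of the ambient (pseudo-)Euclidean space.
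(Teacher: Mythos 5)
The paper offers no proof of this proposition: it simply cites Corollary 4.2 of \cite{dajczer}. Your argument is, in substance, the standard proof of that cited result, and it is correct: the three preliminary observations ($\mathscr{N}^\perp$ is $\nabla^\perp$-parallel, $A_\eta=0$ for $\eta\in\mathscr{N}^\perp$, and hence $\widetilde\nabla_X\eta\in\mathscr{N}^\perp$ by the Weingarten formula) are exactly right and do all the real work. For the final integration step, the cleanest way to make it precise — and the way it is done in the reference — is slightly different from your ``$f\circ\gamma$ stays tangent to $Q_c^{n+q}$'' phrasing: work in the flat ambient $\R^{n+p}$, $\R^{n+p+1}$ or $\mathbb{L}^{n+p+1}$ with connection $\widehat\nabla$, and note that for a $\nabla^\perp$-parallel section $\eta$ of $\mathscr{N}^\perp$ along a curve $\gamma$ in $M$ one has $\widehat\nabla_{\gamma'}\eta=\widetilde\nabla_{\gamma'}\eta=\nabla^\perp_{\gamma'}\eta=0$ (the correction term $\langle\gamma',\eta\rangle I$ in the curved cases vanishes because $\eta\perp TM$), so $\eta$ is a \emph{constant vector} of the ambient linear space. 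This immediately makes your fixed subspace well defined independently of the path and of the curvature sign: $\mathscr{N}^\perp(x)=\mathscr{N}^\perp(x_0)$ as subspaces of the ambient vector space for all $x$, and since $\tfrac{d}{dt}\langle f\circ\gamma,\eta_0\rangle=\langle f_*\gamma',\eta_0\rangle=0$ and $\langle f(x_0),\eta_0\rangle=0$ in the non-flat models, $f(M)$ lies in the intersection of $Q_c^{n+p}$ with the linear (or affine, if $c=0$) subspace $(\mathscr{N}^\perp(x_0))^\perp$, which is precisely a totally geodesic $Q_c^{n+q}$. So your outline closes; the only point I would insist you spell out is the path-independence just described, since without it ``the'' totally geodesic $Q_c^{n+q}$ determined at $x_0$ is not obviously the one every curve lands in.
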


\begin{proof}
See Corollary 4.2 of \cite{dajczer}.
\end{proof}

For our purposes, it will be convenient to introduce the following  concept.

\begin{definition}
An isometric immersion \,$f:M^n\rightarrow{Q}_c^{n+p}$\, will be called  $(1\,;q)$-\emph{semi-regular}, if
it is non-totally geodesic and its restriction to \,${M-M_{\rm tot}}$\, is $(1\,;q)$-regular.
\end{definition}

We establish now two elementary results regarding semi-regular isometric immersions.
The first one appears in \cite{rodriguez-tribuzy} (Lemma 2). For the reader's convenience,
we will present it here with a slightly different proof.

First, recall the \emph{Codazzi equation}  for isometric immersions \,$f:M^n\rightarrow Q_c^{n+p}:$\,
$$
\left(\nabla_{X}^\perp\alpha_f\right)(Y,Z)=\left(\nabla_{Y}^\perp\alpha_f\right)(X,Z), \,\,\, X,Y, Z\in TM,
$$
where
$$
\left(\nabla_{X}^\perp\alpha_f\right)(Y,Z):=\nabla_{X}^\perp\alpha_f(Y,Z)-\alpha_f\left( \nabla_{X}Y,Z \right)-\alpha_f\left(Y, \nabla_{X}Z \right).
$$

\begin{proposition} \label{prop-parallel}
Let  \,$f:M^n\rightarrow Q_c^{n+p}$\, be an $(1\,;1)$-semi-regular isometric immersion such that  \,$M-M_c$\, is nonempty.
Then, \,$\mathscr{N}$\, is parallel in \,$M-M_c$\,.
\end{proposition}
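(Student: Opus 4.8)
The plan is to work locally on $M - M_{\rm tot}$, where, by $(1\,;1)$-regularity of $f|_{M-M_{\rm tot}}$, the first normal bundle $\mathscr{N}$ is a line subbundle of $TM^\perp$. Fix $x \in M - M_c \subset M - M_{\rm tot}$, choose a smooth unit section $\eta$ of $\mathscr{N}$ near $x$, and write $\alpha_f = \phi\,\eta$, where $\phi$ is the symmetric $2$-tensor $\phi(X,Y) = \langle A_\eta X, Y\rangle = \langle\alpha_f(X,Y),\eta\rangle$. Since $\eta$ has unit length, $\langle\nabla_X^\perp\eta, \eta\rangle = 0$, so every local section $\lambda\eta$ of $\mathscr{N}$ satisfies $\nabla_X^\perp(\lambda\eta) = (X\lambda)\eta + \lambda\,\nabla_X^\perp\eta$ with the last term orthogonal to $\mathscr{N}$; hence $\mathscr{N}$ is parallel at $x$ if and only if $\nabla_X^\perp\eta = 0$ for every $X \in T_xM$. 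It therefore suffices to establish this vanishing at each point of the open set $M - M_c$.

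First I would insert the decomposition $\alpha_f = \phi\,\eta$ into the Codazzi equation. A direct computation gives
$$
(\nabla_X^\perp\alpha_f)(Y,Z) = (\nabla_X\phi)(Y,Z)\,\eta + \phi(Y,Z)\,\nabla_X^\perp\eta,
$$
where the first summand lies in $\mathscr{N}$ and the second in its orthogonal complement $\mathscr{N}^\perp$ inside $TM^\perp$. Consequently, projecting the Codazzi identity onto $\mathscr{N}^\perp$ yields, at every point of $M - M_{\rm tot}$,
$$
\phi(Y,Z)\,\nabla_X^\perp\eta = \phi(X,Z)\,\nabla_Y^\perp\eta, \qquad X,Y,Z \in TM. \qquad (\star)
$$

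Next I would bring in the hypothesis $x \notin M_c$: by the Gauss equation recalled in Section \ref{sec-preliminaries}, there are orthonormal $e_1, e_2 \in T_xM$ with $\phi(e_1,e_1)\phi(e_2,e_2) - \phi(e_1,e_2)^2 \neq 0$. Arguing by contradiction, suppose $v := \nabla_{X_0}^\perp\eta \neq 0$ for some $X_0 \in T_xM$. Setting $Y = X_0$ in $(\star)$ and using $v \neq 0$ shows first that $\phi(X_0,\cdot) \not\equiv 0$ (otherwise $\phi \equiv 0$, i.e.\ $x \in M_{\rm tot}$); choosing $Z_0$ with $\phi(X_0, Z_0) \neq 0$, the same instance of $(\star)$ gives $\nabla_X^\perp\eta = \beta(X)\,v$, where $\beta := \phi(\cdot, Z_0)/\phi(X_0, Z_0)$ is a nonzero $1$-form on $T_xM$. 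Substituting this back into $(\star)$ and cancelling $v$ produces the scalar identity $\phi(Y,Z)\,\beta(X) = \phi(X,Z)\,\beta(Y)$; for each fixed $Z$ this says the rank-$\le 1$ bilinear form $(X,Y)\mapsto \beta(X)\,\phi(Y,Z)$ is symmetric, which, since $\beta \neq 0$, forces $\phi(\cdot,Z) = c(Z)\,\beta$ for a scalar $c(Z)$ depending linearly on $Z$; symmetry of $\phi$ then forces $c = \lambda\beta$ for some $\lambda \in \R$, so $\phi = \lambda\,\beta\otimes\beta$. But then $\phi(X,X)\phi(Y,Y) = \phi(X,Y)^2$ for all $X,Y$, contradicting the choice of $e_1,e_2$. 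Hence $\nabla_X^\perp\eta = 0$ for all $X \in T_xM$, and since $x$ was an arbitrary point of $M - M_c$, this set is where $\mathscr{N}$ is parallel.

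The genuinely substantive step is the last one: squeezing the rank-one identity $\phi = \lambda\,\beta\otimes\beta$ out of $(\star)$ by elementary linear algebra, which is exactly what clashes with $x \notin M_c$. The rest is routine bookkeeping — verifying the $\mathscr{N}\oplus\mathscr{N}^\perp$ decomposition of $\nabla_X^\perp\alpha_f$ and checking that the covectors produced along the way ($\beta$ and $Z\mapsto c(Z)$) are genuinely linear, so that the "rank one'' statements are legitimate. I expect no analytic obstacle, since the whole argument is pointwise and local, the only external inputs being the Codazzi and Gauss equations together with the definitions of $M_{\rm tot}$ and $M_c$.
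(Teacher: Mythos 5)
Your proof is correct. It rests on the same two ingredients as the paper's argument — the Codazzi equation applied to the rank-one second fundamental form $\alpha_f=\phi\,\eta$, and the Gauss equation translating $x\notin M_c$ into nondegeneracy of $\phi$ on some $2$-plane — but the execution is different. The paper diagonalizes $A_\eta$ in a neighborhood of $x$, observes that off $M_c$ at least two eigenvalues are nonzero, and compares two specific instances of Codazzi, namely $(\nabla^\perp_{X_j}\alpha_f)(X_i,X_j)=(\nabla^\perp_{X_i}\alpha_f)(X_j,X_j)$, to read off $\nabla^\perp_{X_i}\eta\in\mathscr{N}$ directly. You instead derive the frame-free tensorial identity $\phi(Y,Z)\,\nabla^\perp_X\eta=\phi(X,Z)\,\nabla^\perp_Y\eta$ by projecting Codazzi onto $\mathscr{N}^\perp$, and then show by linear algebra that $\nabla^\perp\eta\neq0$ at $x$ would force $\phi=\lambda\,\beta\otimes\beta$, i.e.\ ${\rm rank}\,\phi\le 1$, which Gauss forbids off $M_c$. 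Your route avoids choosing an eigenframe and yields the slightly stronger structural dichotomy (either $\mathscr{N}$ is parallel at $x$ or $A_\eta$ has rank at most one there), at the cost of a longer elimination argument; the paper's eigenframe computation is shorter but less invariant. All the individual steps you flag as "routine bookkeeping'' — the $\mathscr{N}\oplus\mathscr{N}^\perp$ splitting of $\nabla^\perp_X\alpha_f$, the linearity of $\beta$ and of $Z\mapsto c(Z)$, and the equivalence of parallelism of the line bundle $\mathscr{N}$ with $\nabla^\perp\eta=0$ — do check out.
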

\begin{proof}
Given \,$x\in M-M_c$\,, let \,$\xi$\, be a unit normal field which spans \,$\mathscr{N}$\, in an open neighborhood \,$V$\, of
\,$x$\, in \,$M-M_c$\,, and  \,$\{X_1\,, \dots ,X_n\}$\,  an orthonormal frame in \,$TV$\, that diagonalizes
\,$A_\xi$\, with corresponding eigenvalues \,$\lambda_1\,,\dots ,\lambda_n$\,. In this setting,  one has
\begin{equation} \label{eq-sff}
\alpha_f(X_i,X_j)=\langle\alpha_f(X_i,X_j),\xi\rangle\xi=\langle A_\xi X_i,X_j\rangle\xi=\delta_{ij}\lambda_i\xi.
\end{equation}

By Gauss equation, for all \,$i\ne j\in\{1,\dots ,n\},$\,
\begin{equation} \label{eq-gauss}
K_{\scriptscriptstyle{M}}(X_i\,,X_j)=c+\lambda_i\lambda_j\,.
\end{equation}
Hence, at each point of  \,$V\subset M-M_c$\,, \,$A_\xi$\, has at least two nonzero eigenvalues, that is, for a given
\,$i\in\{1,\dots  ,n\},$\,  there exists \,$j\ne i,$\, such that
\,$\lambda_j\ne 0.$\, Moreover, from \eqref{eq-sff}, we have that  \,$\alpha_f(X_i,X_j)=0$\, and \,$\alpha_f(X_j,X_j)=\lambda_j\xi.$\,
Thus, the following equalities hold:
\begin{itemize}[parsep=1ex]
\item $\left(\nabla_{X_j}^\perp\alpha_f\right)(X_i,X_j)=-\alpha_f\left( \nabla_{X_j}X_i,X_j \right)-\alpha_f\left(X_i\,, \nabla_{X_j}X_j \right).$

\item $\left(\nabla_{X_i}^\perp\alpha_f\right)(X_j,X_j)=(X_i\lambda_j)\xi+\lambda_j\nabla_{X_i}^\perp\xi-2\alpha_f\left( \nabla_{X_i}X_j,X_j \right).$
\end{itemize}
\vt

Now, the Codazzi equation gives that the right hand sides of these two equalities coincide,
which  implies that
\,$\nabla_{X_i}^\perp\xi(x)\in\mathscr{N}(x).$\, Therefore, the first normal bundle
\,$\mathscr{N}$\, is parallel in \,$M-M_c$\,.
\end{proof}

\begin{proposition} \label{prop-sdsr}
Let \,$f:M^n\rightarrow Q_c^{n+p}$\, be a non-totally geodesic  isometric immersion with
semi-definite second fundamental form,  and denote by \,$H$\, its mean curvature field. Then, the following hold:
\begin{itemize}[parsep=1ex]
\item[{\rm i)}]  \,$x\in M_{\rm tot}$\, if and only if \,$H(x)=0.$

\item[{\rm ii)}]  For \,$x\in M-M_{\rm tot}$\,, \,$\mathscr{N}(x)={\rm span}\,\{H(x)\}$\,
and all the eigenvalues of \,$A_H$\, are nonnegative. In particular, \,$f$\, is  $(1\,;1)$-semi-regular.

\item[{\rm iii)}] \,$K_{\scriptscriptstyle{M}}\ge c$\, everywhere.
\end{itemize}
\end{proposition}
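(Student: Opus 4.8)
The plan is to derive all three assertions from the semi-definiteness hypothesis together with Gauss equation, working pointwise. Fix $x\in M-M_{\rm tot}$ and argue as follows. For each unit normal $\xi\in T_xM^\perp$, semi-definiteness means the shape operator $A_\xi$ has all nonzero eigenvalues of one sign; choosing an orthonormal eigenbasis $\{X_1,\dots,X_n\}$ of $T_xM$ that simultaneously... well, we cannot simultaneously diagonalize all $A_\xi$, so instead I would work one normal direction at a time. The key observation is this: if $A_\xi$ has an eigenvector $X$ with eigenvalue $\lambda\ne 0$, then $\langle\alpha_f(X,X),\xi\rangle=\lambda\ne 0$, so $\alpha_f(X,X)\ne 0$; conversely, if $\alpha_f(X,X)=0$ for a unit vector $X$, then $\langle A_\eta X,X\rangle=0$ for every normal $\eta$, and semi-definiteness of each $A_\eta$ forces $X\in\ker A_\eta$ for all $\eta$, hence $X\in\Delta(x)$. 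This already shows $\alpha_f(X,X)=0 \iff X\in\Delta(x)$, which is the crucial rigidity built into the hypothesis.

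\textbf{Assertion (i).} The mean curvature field is $H(x)=\frac{1}{n}\sum_i\alpha_f(X_i,X_i)$ for any orthonormal frame. If $x\in M_{\rm tot}$ then trivially $H(x)=0$. Conversely, suppose $H(x)=0$. Pairing with an arbitrary unit normal $\xi$ gives $\sum_i\langle A_\xi X_i,X_i\rangle=\mathrm{trace}\,A_\xi=0$; since $A_\xi$ is semi-definite, its trace vanishes only if $A_\xi=0$. As $\xi$ was arbitrary, $\alpha_f\equiv 0$ at $x$, i.e. $x\in M_{\rm tot}$.

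\textbf{Assertion (ii).} Let $x\in M-M_{\rm tot}$, so by (i) $H(x)\ne 0$. I claim first that all eigenvalues of $A_{H(x)}$ are nonnegative: for any unit $X$, write $\langle A_{H} X,X\rangle=\langle\alpha_f(X,X),H\rangle=\frac1n\sum_i\langle\alpha_f(X,X),\alpha_f(X_i,X_i)\rangle$, and by the polarization/Gauss relation each term $\langle\alpha_f(X,X),\alpha_f(X_i,X_i)\rangle\ge 0$ because $\langle A_{\alpha_f(X_i,X_i)}X,X\rangle\ge 0$ by semi-definiteness of $A_{\alpha_f(X_i,X_i)}$; hence $\langle A_H X,X\rangle\ge 0$. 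Next I show $\mathscr{N}(x)=\mathrm{span}\{H(x)\}$. One inclusion is clear since $H(x)\in\mathscr{N}(x)$ (as $x\notin M_{\rm tot}$, $H(x)\ne 0$). For the reverse, I argue that $\mathscr{N}(x)$ is one-dimensional: suppose $\eta\in\mathscr{N}(x)$ is orthogonal to $H(x)$; then $\mathrm{trace}\,A_\eta=n\langle H,\eta\rangle=0$, so $A_\eta=0$ by semi-definiteness, meaning $\langle\alpha_f(X,Y),\eta\rangle=0$ for all $X,Y$, i.e. $\eta\perp\mathscr{N}(x)$, forcing $\eta=0$. Thus $\dim\mathscr{N}(x)=1$ for every $x\in M-M_{\rm tot}$, and since $\mathscr{N}(x)$ and $H(x)$ vary continuously this shows the restriction of $f$ to $M-M_{\rm tot}$ is $(1\,;1)$-regular, i.e. $f$ is $(1\,;1)$-semi-regular.

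\textbf{Assertion (iii).} This follows from Gauss equation. At a totally geodesic point, $K_M=c$. At $x\in M-M_{\rm tot}$, fix orthonormal $X,Y\in T_xM$ and write $\xi$ for the unit vector spanning $\mathscr{N}(x)$. Then $\alpha_f(X,X)=aX'\xi$-type expressions reduce everything to the single normal direction $\xi$: $K_M(X,Y)=c+\langle\alpha_f(X,X),\alpha_f(Y,Y)\rangle-\|\alpha_f(X,Y)\|^2$. Since $\alpha_f$ takes values in $\mathrm{span}\{\xi\}$, writing $a=\langle A_\xi X,X\rangle$, $b=\langle A_\xi Y,Y\rangle$, $m=\langle A_\xi X,Y\rangle$ we get $K_M(X,Y)=c+ab-m^2$. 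Restricting $A_\xi$ to the plane $\mathrm{span}\{X,Y\}$ gives a symmetric $2\times2$ matrix $\begin{pmatrix}a&m\\m&b\end{pmatrix}$ which, by semi-definiteness of $A_\xi$, is itself semi-definite, hence has nonnegative determinant $ab-m^2\ge 0$. Therefore $K_M(X,Y)\ge c$. The main obstacle here is purely bookkeeping: one must be careful that semi-definiteness of a self-adjoint operator passes to its restriction to any subspace, and that the sign conventions make the determinant nonnegative rather than nonpositive; everything else is a direct consequence of the Gauss and polarization identities already recorded in the excerpt.
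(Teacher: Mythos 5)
Your proof is correct and follows essentially the same strategy as the paper: parts (i) and (ii) rest on the observation that a semi-definite shape operator vanishes exactly when its trace does, and part (iii) is the Gauss equation. Two remarks. In (ii) you establish the nonnegativity of the eigenvalues of $A_H$ by polarization, asserting that $\langle A_{\alpha_f(X_i,X_i)}X,X\rangle\ge 0$ ``by semi-definiteness''; but semi-definiteness alone only says this form has one sign, and you still need the one-line observation that $\langle A_{\alpha_f(X_i,X_i)}X_i,X_i\rangle=\|\alpha_f(X_i,X_i)\|^2\ge 0$ pins that sign to be nonnegative. The detour is also avoidable: once you know $\mathscr{N}(x)={\rm span}\{H(x)\}$ --- which your trace argument for $\eta\perp H(x)$ already gives, and which is how the paper proceeds, choosing an orthonormal normal frame with $\xi_1=H/\|H\|$ and showing $A_{\xi_j}=0$ for $j\ge 2$ --- the operator $A_H$ is semi-definite with positive trace, hence positive semi-definite, with no polarization needed. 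In (iii), your restriction of $A_\xi$ to an arbitrary plane and the resulting $2\times 2$ determinant inequality $ab-m^2\ge 0$ is in fact slightly more complete than the paper's one-line appeal to a basis diagonalizing $A_H$, since it covers planes not spanned by eigenvectors; the garbled sentence about ``$aX'\xi$-type expressions'' should simply say that $\alpha_f$ takes values in ${\rm span}\{\xi\}$ by part (ii).
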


\begin{proof}
By definition, one has
\begin{equation}\label{eq-meancurvature}
H=\sum_{i=1}^{n}\alpha_f(X_i\,,X_i)=\sum_{j=1}^p(\text{trace}\, A_{\xi_j})\xi_j\,,
\end{equation}
where \,$\{X_1\,,\dots ,X_n\}$\, and \,$\{\xi_1\,, \dots ,\xi_p\}$\, are arbitrary orthonormal frames
in \,$TM$\, and \,$TM^\perp,$\, respectively.

Since \,$f$\, has semi-definite second fundamental form,
a  shape operator \,$A_{\xi}$\, of \,$f$\,  is identically zero if and only if its trace is equal to zero.
Consequently, \,$f$\, is totally geodesic at \,$x\in M$\,
if  and only if  \,$H(x)=0,$\, which proves (i).

For  \,$x\in M-M_{\rm tot}$\,,  we can choose
an orthonormal frame \,$\{\xi_1\,, \dots ,\xi_p\}\subset TM^\perp$\, in an open neighborhood
\,$U$\, of \,$x,$\,
with  \,$\xi_1={H}/{\|H\|}.$\, In this case, \eqref{eq-meancurvature} yields
$$\text{trace}\, A_{\xi_1}=\|H\|>0 \quad\text{and}\quad \text{trace}\,A_{\xi_j}=0 \,\, (\text{i.e.} \,\,  A_{\xi_j}=0) \,\, \forall j=2, \dots ,p.$$
Therefore, for \,$X, Y\in TU,$\, one has
$$
\alpha_f(X,Y)=\sum_{j=1}^{p}\langle\alpha_f(X,Y),\xi_j\rangle\xi_j=\sum_{j=1}^{p}\langle A_{\xi_j}X,Y\rangle\xi_j=\langle A_{\xi_1}X,Y\rangle\xi_1\,,
$$
which implies that  \,$\mathscr{N}(x)={\rm span}\,\{H(x)\}.$\, Since the trace of \,$A_{\xi_1}$\, is positive, we also have that
all of its eigenvalues are nonnegative. This proves (ii).

Now, considering  an orthonormal basis of
\,$T_xM$\, that diagonalizes \,$A_{H}$\, and the Gauss equation (as in \eqref{eq-gauss}), we easily
conclude that \,$K_{\scriptscriptstyle{M}}\ge c$\, at \,$x.$\,
Since \,$K_{\scriptscriptstyle{M}}=c$\, on \,$M_{{\rm tot}}$\,, we have that
\,$K_{\scriptscriptstyle{M}}\ge c$\, on all of \,$M,$\, which proves (iii).
\end{proof}

\begin{remark}  \label{rem-nullity}
By means of the Gauss equation, as in  the last paragraph of the above proof, we conclude that
for \,$x\in M_c-M_{\rm tot}$\,,  there is exactly one nonzero eigenvalue of \,$A_H$\,.
So, \emph{for any  $(1\,;1)$-{semi-regular} isometric immersion}
\,$f:M^n\rightarrow{Q}_c^{n+p},$\,
$$\nu(x)=n-1 \,\,\, \forall x\in M_c-M_{\rm tot}\,.$$
\end{remark}

\section{Beltrami Maps}  \label{sec-beltramimaps}

Given a point \,$e$\, in \,$S^n\subset\R^{n+1},$\, let \,$\mathcal{H}_{e}$\, be the open hemisphere of \,$S^n$\, centered at \,$e,$\, that is,
the open geodesic ball of \,$S^n$\, with center at \,$e$\, and radius \,$\pi/2.$\,
The central projection \,$\varphi$\, from  \,$\mathcal{H}_{e}$\, to
the tangent space of \,$S^n$\, at \,$e,$\, which we identify with \,$\R^n,$\,  is a diffeomorphism called the \emph{Beltrami map} of  \,$\mathcal{H}_e$\,.

For   \,$e=(0,\dots ,0,1),$\,
this map  is defined as
\begin{equation} \label{eq-beltramimap}
\begin{array}{cccc}
\varphi: & \mathcal{H}_e & \rightarrow & \R^n\\
         &     x         & \mapsto     & \frac{x}{x_{n+1}}
\end{array},
\end{equation}
where \,$x_{n+1}$\, stands for the $(n+1)$-th coordinate of \,$x$\, in \,$\R^{n+1}.$\,

It is easily seen that \,$\varphi$\, and its inverse are both  geodesic maps,
that is, they take geodesics to geodesics and, in particular,
convex sets to convex sets.

Now, consider the  Lorentzian space \,$\mathbb{L}^{n+1}=(\R^{n+1}, \langle \,,\, \rangle_{\scriptscriptstyle{\mathbb{L}}}),$\,
where
$$
 \langle x\,, x \rangle_{\scriptscriptstyle{\mathbb{L}}}=\sum_{i=1}^{n}x_i^2 -x_{n+1}^2\,, \,\, x=(x_1\,,\dots ,x_{n+1}),
$$
and the hyperboloid model of hyperbolic space
$$
\h^n=\{x\in\mathbb{L}^{n+1};  \langle x\,,x \rangle_{\scriptscriptstyle{\mathbb{L}}}=-1 \,\,\,\text{and} \,\,\, x_{n+1}>0 \}.
$$

Let us   identify the  affine subspace \,$x_{n+1}=1$\, of \,$\mathbb{L}^{n+1}$\, with \,$\R^n,$\,
and denote its unit open ball centered at \,$0$\, by \,$B^n.$\,
Considering in $B^n$\, the Euclidean  metric,
one has that the  \emph{Beltrami map} of \,$\h^n,$\,
\begin{equation} \label{eq-beltramimap10}
\begin{array}{cccc}
\varphi: & \mathbb{H}^n & \rightarrow & B^n\\
         &     x         & \mapsto     & \frac{x}{x_{n+1}}
\end{array},
\end{equation}
is also a geodesic diffeomorphism.
This follows from the well known fact that, endowed with a suitable metric,
\,$B^n$\, represents the Kleinian model of hyperbolic space. In this setting,  the map \,$\varphi$\, becomes  an
isometry (in particular, a geodesic diffeomorphism)  between the Lorentzian and the Kleinian models of \,$\h^n.$\,
However, the geodesics of the Kleinian model  are precisely the segments of Euclidean
lines which lie in \,$B^n.$

The following result, which generalizes Proposition 2.3 of \cite{docarmo-warner}, will play a fundamental
role in the proofs of our  theorems.

\begin{proposition} \label{prop-beltramimaps}
Let  \,$\mathcal{H}^{n+p}$\,  be either an open hemisphere of \,$S^{n+p}$\, or the hyperbolic space \,$\h^{n+p},$\, and
\,$\varphi:\mathcal{H}^{n+p}\rightarrow\R^{n+p}$\, its corresponding Beltrami map. Suppose that
\,$f:M^n\rightarrow\mathcal{H}^{n+p}$\, is an isometric immersion with second fundamental form \,$\alpha_f$\,.
Consider also in \,$M$\, the metric induced by the immersion
$$\bar{f}:=\varphi\circ f:M^n\rightarrow\R^{n+p}$$
and denote its second fundamental form by \,$\alpha_{\bar f}$\,.
Under these conditions, the following hold:

\begin{itemize}[parsep=1ex]

\item[{\rm i)}] At a point \,$x\in M,$\, \,$\alpha_f$\, is semi-definite (resp. definite) if and only
if \,$\alpha_{\bar f}$\, is semi-definite (resp. definite).

\item[{\rm ii)}] The set of totally geodesic points of \,$f$\, and \,$\bar{f}$\, coincide.

\end{itemize}

\end{proposition}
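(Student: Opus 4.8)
The plan is to exploit the fact that Beltrami maps are geodesic diffeomorphisms, so the key geometric fact we need is a pointwise relation between the second fundamental forms $\alpha_f$ and $\alpha_{\bar f}$ at each $x\in M$. The crucial observation is that a geodesic map $\varphi$ pulls back the property ``$\gamma$ is a geodesic of the target'' to ``$\gamma$ is a geodesic of the source'' (up to reparametrization), and therefore the two Levi-Civita connections of $\mathcal{H}^{n+p}$ and $\R^{n+p}$ (transported via $\varphi$) differ by a term of the special \emph{projective} form $\nabla_X Y - \widetilde{\nabla}_X Y = \omega(X)Y + \omega(Y)X$ for a suitable $1$-form $\omega$ on $\mathcal{H}^{n+p}$ (the projective change of connection). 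First I would record this: writing $D$ for the flat connection of $\R^{n+p}$ (pulled back to $\mathcal{H}^{n+p}$ via $\varphi^{-1}$) and $\widetilde{\nabla}$ for the connection of $\mathcal{H}^{n+p}$, there is a $1$-form $\omega$ with $D_X Y = \widetilde{\nabla}_X Y + \omega(X)Y + \omega(Y)X$ for all vector fields $X,Y$ tangent to $\mathcal{H}^{n+p}$. (For the hemisphere one can write $\omega$ explicitly from $\varphi(x)=x/x_{n+1}$; for $\h^{n+p}$ one uses the Kleinian model as in the excerpt. This is classical but I would give the one-line computation.)

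Next I would compute $\alpha_{\bar f}$ in terms of $\alpha_f$. Since $\bar f = \varphi\circ f$ and $\varphi$ is a diffeomorphism, $T_x M$ and $T_x M^\perp_{\bar f}$ sit inside $T_{\bar f(x)}\R^{n+p}$, which under $d\varphi$ is identified with $T_{f(x)}\mathcal{H}^{n+p}$; note, however, that this identification is \emph{not} an isometry, so the splitting $T\mathcal{H}^{n+p}|_M = TM \oplus TM^\perp$ for $f$ differs from the splitting for $\bar f$ only in what ``$\perp$'' means — the tangent part $TM$ (as a subspace) is the same. Now for $X,Y$ tangent to $M$,
$$
\alpha_{\bar f}(X,Y) = \big(D_X Y\big)^{\perp_{\bar f}} = \big(\widetilde{\nabla}_X Y + \omega(X)Y + \omega(Y)X\big)^{\perp_{\bar f}},
$$
and since $\widetilde{\nabla}_X Y = \nabla^f_X Y + \alpha_f(X,Y)$ with $\nabla^f_X Y$ tangent to $M$ and $\omega(X)Y+\omega(Y)X$ also tangent to $M$, only the $\alpha_f(X,Y)$ term survives the projection onto the $\bar f$-normal space. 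The upshot is a relation of the form
$$
\alpha_{\bar f}(X,Y) = \Pi\big(\alpha_f(X,Y)\big),
$$
where $\Pi$ is the (invertible, linear at each point) restriction of $d\varphi$ followed by orthogonal projection $T\R^{n+p}\to TM^{\perp_{\bar f}}$; I would check that $\Pi$ is a bundle isomorphism $TM^{\perp_f}\to TM^{\perp_{\bar f}}$, which follows because $d\varphi$ is an isomorphism and both normal bundles are complements of the common subbundle $TM$.

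Both conclusions then follow almost immediately. For (ii): $\alpha_f(x)\equiv 0$ iff $\alpha_{\bar f}(x)\equiv 0$, because $\Pi$ is an isomorphism on the relevant fibers; hence $M_{\rm tot}$ is the same set for $f$ and $\bar f$. For (i): semi-definiteness of $\alpha_f$ at $x$ means that for every $\xi\in T_xM^{\perp_f}$ the quadratic form $\langle \alpha_f(\cdot,\cdot),\xi\rangle$ is semi-definite; composing with the isomorphism $\Pi$ and its adjoint translates this into the corresponding statement for $\alpha_{\bar f}$ relative to the metric $\bar f$ induces on $M$ — more precisely, $\langle \alpha_{\bar f}(X,Y),\eta\rangle_{\R} = \langle \alpha_f(X,Y), \Pi^{*}\eta\rangle$-type identity, and since $\Pi^{*}$ is again an isomorphism $T_xM^{\perp_{\bar f}}\to T_xM^{\perp_f}$, the sign-behaviour of the quadratic forms is unchanged; definiteness transfers the same way. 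I expect the only real obstacle to be bookkeeping: carefully distinguishing the two metrics on $M$, the two normal bundles, and keeping track of which objects are intrinsic to $M$ (and hence shared) versus ambient; once the projective-connection identity is in hand, the rest is linear algebra fiber by fiber.
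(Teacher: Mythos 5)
Your proof is correct, but it takes a genuinely different route from the paper's. The paper argues extrinsically in \,$\R^{n+p+1}$\, (resp.\ \,$\mathbb{L}^{n+p+1}$): writing \,$\varphi(x)=\phi(x)x,$\, it pushes tangent vectors forward as \,$\overline{X}=(X\phi)I+\phi X,$\, associates to each \,$f$-normal \,$\xi$\, its component \,$\overline{\xi}=\xi-\xi_{n+p+1}e$\, in the hyperplane \,$\Pi,$\, and, extending \,$\xi$\, radially so that \,$\widehat{\nabla}_I\xi=0,$\, computes the explicit identity \,$\langle\alpha_{\bar f}(\overline{X},\overline{X}),\overline{\xi}\rangle=\phi^2\langle\alpha_f(X,X),\xi\rangle,$\, from which (i) and (ii) are immediate since \,$\phi$\, never vanishes. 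You instead invoke the classical projective-equivalence description of geodesic maps, \,$D_XY=\widetilde{\nabla}_XY+\omega(X)Y+\omega(Y)X,$\, observe that the difference tensor is tangent to \,$M$\, and hence is killed by the \,$\bar f$-normal projection, and conclude \,$\alpha_{\bar f}=\Pi\circ\alpha_f$\, with \,$\Pi$\, the projection-along-$TM$ isomorphism between the two normal bundles; (i) and (ii) then follow by the fibrewise linear algebra you describe. Your version is more conceptual and works verbatim for any geodesic diffeomorphism of ambient spaces, at the price of quoting (or proving) the Levi-Civita/Weyl lemma that two torsion-free connections with the same unparametrized geodesics differ by such a tensor; the paper's computation is self-contained and its explicit factor \,$\phi^2$\, also yields the rank statement used later for the Gauss--Kronecker curvature (which your formulation recovers as well, since \,$\Pi$\, is an isomorphism). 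To make your argument airtight you should record two routine facts: that the difference tensor \,$S(X,Y)$\, indeed lies in \,${\rm span}\{X,Y\}$\, (polarize \,$S(X,X)\parallel X$), and that the mixed-metric adjoint \,$\Pi^{*}$\, is an isomorphism because both metrics are positive definite on the respective normal spaces.
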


\begin{proof}
In the spherical case,  we may assume without loss of generality that
\,$\mathcal{H}^{n+p}$\, is the open hemisphere of \,$S^{n+p}$\, centered at
\,$e=(0,\dots ,0, 1)\in\R^{n+p+1}.$\, In this way, we can write
\begin{equation} \label{eq-beltramidef}
\varphi(x)=\phi(x)x, \quad\phi(x)=\frac{1}{x_{n+p+1}}\,, \,\,\,\,  x=(x_1\,, \dots , x_{n+p+1})\in \mathcal{H}^{n+p},
\end{equation}
and treat both the spherical and hyperbolic cases simultaneously.

Recall that the Riemannian connection of \,$\R^{n+p+1},$\,
which we will denote by \,$\widehat{\nabla},$\,
is the same for both the Euclidean and Lorentzian metrics. So, denoting
the  Riemannian connection of \,$\mathcal{H}^{n+p}$\,  by
\,$\widetilde{\nabla},$\,  one has
\begin{equation} \label{eq-connections}
\widehat{\nabla}_XY=\widetilde{\nabla}_XY-\langle X,Y\rangle I \,\,\,\, \forall X, Y\in T\mathcal{H}^{n+p},
\end{equation}
where \,$I$\, stands for the identity field of \,$\R^{n+p+1}$\, and \,$\langle\,,\rangle$\, is either
the Euclidean or Lorentzian metric of \,$\R^{n+p+1}.$

Denote by \,$\overline{M}$\, the manifold \,$M$\, endowed with the metric induced by \,$\bar{f}.$\,
Considering \eqref{eq-beltramidef}, we have for all \,$X\in TM$\, that
\begin{equation} \label{eq-tangent}
\overline{X}:=\varphi_* X=(X\phi)I+\phi X\in T\overline{M}\subset\Pi,
\end{equation}
where \,$\Pi$\, denotes the Euclidean orthogonal complement of \,$e$\, in \,$\R^{n+p+1}.$

Now, given \,$\xi\in TM^\perp\subset T\mathcal{H}^{n+p},$\, one has   \,$\langle \xi, I\rangle= 0.$\,
Thus, writing \,$\overline{\xi}$\, for  the component
of \,$\xi$\, in \,$\Pi,$\, that is,
\begin{equation}  \label{eq-orthogonal}
\overline{\xi}:=\xi-(\xi_{n+p+1}) e, \,\,\,\, \xi\in TM^\perp,
\end{equation}
we have that \,$\overline{\xi}\in T\overline{M}^\perp\subset\Pi.$\, Indeed, for all
\,$\overline{X}=\varphi_* X\in T\overline{M},$\,  one has
$$
\langle\overline{\xi},\overline{X}\rangle = \langle \xi,\overline{X}\rangle = \langle \xi,(X\phi)I+\phi X\rangle =0.
$$

Let us  fix a point \,$x\in M.$\,
Since \,$\varphi$\, is a diffeomorphism and no tangent space to
\,$\mathcal{H}^{n+p}$\, is orthogonal to \,$\Pi,$\, for a suitable open neighborhood \,$U$\, of \,$x$\,,
the correspondences defined in \eqref{eq-tangent} and \eqref{eq-orthogonal},
$$
X\in TU\mapsto\overline{X}\in T\overline{U} \quad\text{and}\quad \xi\in TU^\perp\mapsto\overline{\xi}\in T\overline{U}^\perp,
$$
are bundle isomorphisms.

Now, extending a given \,$\xi\in TU^\perp$\,  by parallel displacement along the lines through the
origin of \,$\R^{n+p+1},$\, we have that \,$\widehat{\nabla}_I\xi=0.$\,
Thus,
\begin{equation}\label{eq-beltrami1}
\widehat{\nabla}_{\overline{X}}\,\xi=\phi\widehat{\nabla}_X\xi\, \quad  \forall\,\overline{X}=\varphi_* X\in T\overline{U}.
\end{equation}
Furthermore, given \,$X\in TU,$\, one  has
\begin{equation}\label{eq-beltrami2}
0=X\langle \xi, I\rangle=\langle\widehat{\nabla}_X \xi, I\rangle+\langle\xi,\widehat{\nabla}_XI\rangle=
\langle\widehat{\nabla}_X \xi, I\rangle+\langle\xi,X\rangle
=\langle\widehat{\nabla}_X \xi, I\rangle
\end{equation}
and, from \eqref{eq-connections}, that
\,$\widehat{\nabla}_{{X}}\,{\xi}=\widetilde{\nabla}_{{X}}\,{\xi}.$\, This, together with
\eqref{eq-beltrami1} and \eqref{eq-beltrami2}, yields
$$
\langle\alpha_{\bar{f}}(\overline{X}\,,\overline{X}),\overline{\xi}\rangle =
-\langle\widehat{\nabla}_{\overline{X}}\,\overline{\xi}, \overline{X}\rangle=
-\langle\widehat{\nabla}_{\overline{X}}\,{\xi}, \overline{X}\rangle=-\phi^2\langle\widehat{\nabla}_{X} \xi,  X\rangle
=-\phi^2\langle\widetilde{\nabla}_{X} \xi,  X\rangle,
$$
which implies that the equality
\begin{equation} \label{eq-convexity1}
\langle\alpha_{\bar{f}}(\overline{X},\overline{X}),\overline{\xi}\rangle=\phi^2\langle\alpha_f({X},{X}),\xi\rangle
\end{equation}
holds everywhere in \,$U$\, and, in particular, at \,$x.$
Since the function \,$\phi$\, never vanishes, the assertions (i) and (ii)
easily follow.
\end{proof}

\begin{remark} \label{rem-curvature}
In the above proof, equation \eqref{eq-convexity1} gives that the ranks of the shape operators
\,$A_\xi$\, and \,$A_{\bar\xi}$\, coincide.
Consequently, when the codimension  \,$p$\, is \,$1$\, and \,$M$\, is oriented,
\,$x\in M$\, is a point of vanishing Gauss-Kronecker curvature for \,$f$\, if and only if it is for \,$\bar{f}=\varphi\circ f$\,
(recall that the \emph{Gauss-Kronecker curvature} of an oriented hypersurface \,$f:M^n\rightarrow{Q}_c^{n+1}$\,
is the determinant of  its shape operator).
\end{remark}

By combining Proposition \ref{prop-beltramimaps} with an outstanding  result by Guan and Spruck \cite{guan-spruck}, we obtain the following
existing result for hypersurfaces with prescribed boundary and vanishing Gauss-Kronecker curvature in space forms.

\begin{corollary} \label{cor-prescribedboundary}
Let \,$\Gamma^{n-1}\subset\mathcal{H}^{n+1}$\, be  a compact
embedded (not necessarily connected) $(n-1)$-dimensional submanifold of \,$\mathcal{H}^{n+1}.$\, Assume that
there exists a connected compact oriented \,$C^2$\, hypersurface with boundary \,$g:N^n\rightarrow\mathcal{H}^{n+1},$\,
satisfying \,$g(\partial N)=\Gamma,$\,
whose second fundamental form  is semi-definite on \,$N$\, and definite  in a neighborhood of \,$\partial N.$\,
Under these conditions, there exists a connected compact oriented  \,$C^{1,1}$\, (up to the boundary) hypersurface
\,$f:M^n\rightarrow\mathcal{H}^{n+1}$\, with semi-definite  second fundamental form,
which satisfies \,$f(\partial M)=\Gamma$\,
and has vanishing Gauss-Kronecker curvature everywhere.
\end{corollary}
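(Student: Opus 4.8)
The plan is to transfer the problem to Euclidean space by means of the Beltrami map \,$\varphi:\mathcal{H}^{n+1}\rightarrow\R^{n+1},$\, solve it there by invoking the existence theorem of Guan and Spruck \cite{guan-spruck}, and then pull the solution back to \,$\mathcal{H}^{n+1}$\, through \,$\varphi^{-1}.$\, The point is that every piece of data occurring in the statement --- being a compact embedded submanifold, being a compact oriented hypersurface with boundary, having semi-definite (resp. definite) second fundamental form at a point, and having vanishing Gauss--Kronecker curvature at a point --- is preserved, in both directions, by \,$\varphi.$\, The first two facts are immediate, while the last two are exactly the content of Proposition \ref{prop-beltramimaps}\,(i) and Remark \ref{rem-curvature}. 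Since \,$\varphi$\, and \,$\varphi^{-1}$\, are \,$C^\infty$\, diffeomorphisms, they also preserve the \,$C^{1,1}$\, (and \,$C^2$) regularity class; equation \eqref{eq-convexity1}, on which Proposition \ref{prop-beltramimaps} rests, then holds almost everywhere when the hypersurface is merely \,$C^{1,1},$\, which is enough to carry the semi-definiteness and the vanishing of the Gauss--Kronecker curvature across \,$\varphi$\, in this reduced regularity. (This is precisely the situation envisaged when the paper allows Corollary \ref{cor-prescribedboundary} to fall outside its standing \,$C^\infty$\, hypothesis.)

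First I would set up the Euclidean problem. Put \,$\bar\Gamma:=\varphi(\Gamma)\subset\R^{n+1},$\, again a compact embedded (not necessarily connected) \,$(n-1)$-dimensional submanifold, and \,$\bar g:=\varphi\circ g:N^n\rightarrow\R^{n+1},$\, a connected compact oriented \,$C^2$\, hypersurface with boundary and \,$\bar g(\partial N)=\bar\Gamma.$\, By Proposition \ref{prop-beltramimaps}\,(i), the second fundamental form of \,$\bar g$\, is semi-definite on all of \,$N$\, and definite on a neighborhood of \,$\partial N$\,, these being pointwise conditions preserved by \,$\varphi.$\, Hence the pair \,$(\bar\Gamma,\bar g)$\, meets the hypotheses of the Guan--Spruck theorem on the existence of a locally convex hypersurface with vanishing Gauss--Kronecker curvature, prescribed boundary, and a convex barrier which is strictly convex near the boundary.

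Applying \cite{guan-spruck}, we obtain a connected compact oriented \,$C^{1,1}$\, (up to the boundary) hypersurface \,$\bar f:M^n\rightarrow\R^{n+1}$\, with semi-definite second fundamental form, \,$\bar f(\partial M)=\bar\Gamma,$\, and vanishing Gauss--Kronecker curvature. In the spherical case \,$\varphi(\mathcal{H}^{n+1})=\R^{n+1},$\, so \,$f:=\varphi^{-1}\circ\bar f$\, is at once a well-defined hypersurface into \,$\mathcal{H}^{n+1}.$\, In the hyperbolic case \,$\varphi(\h^{n+1})$\, is the open unit ball \,$B^{n+1}$\, (see \eqref{eq-beltramimap10}), and one must check that \,$\bar f(M)\subset B^{n+1}$\, before pulling back; this follows because the Guan--Spruck solution is contained in the convex hull of \,$\bar g(N),$\, and \,$\bar g(N)=\varphi(g(N))$\, is a compact subset of the open ball \,$B^{n+1},$\, so its convex hull is a compact subset of \,$B^{n+1}$\, as well. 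In either case \,$f:=\varphi^{-1}\circ\bar f:M^n\rightarrow\mathcal{H}^{n+1}$\, is a connected compact oriented \,$C^{1,1}$\, hypersurface with \,$f(\partial M)=\varphi^{-1}(\bar\Gamma)=\Gamma,$\, whose second fundamental form is semi-definite by Proposition \ref{prop-beltramimaps}\,(i), and whose Gauss--Kronecker curvature vanishes by Remark \ref{rem-curvature}. This is the hypersurface asserted by the corollary.

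Given Proposition \ref{prop-beltramimaps}, the reduction to the Euclidean case is essentially formal, so the main obstacle is of a different nature: one must align the hypotheses of the corollary with the precise statement of \cite{guan-spruck} --- verifying in particular that the given \,$g$\, is exactly the type of barrier required there (semi-definite on \,$N$\,, definite near \,$\partial N$) --- and, in the hyperbolic case, confirm the containment \,$\bar f(M)\subset B^{n+1},$\, so that the pull-back by \,$\varphi^{-1}$\, is legitimate. A secondary point calling for care is the low-regularity bookkeeping: one must verify that Proposition \ref{prop-beltramimaps} and Remark \ref{rem-curvature}, proved for \,$C^\infty$\, immersions, remain valid almost everywhere for the \,$C^{1,1}$\, hypersurface furnished by \cite{guan-spruck}.
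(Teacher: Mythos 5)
Your proof follows the paper's own argument exactly: push the data forward by the Beltrami map, check via Proposition \ref{prop-beltramimaps} that the hypotheses of Theorem 1.2 of \cite{guan-spruck} are met in $\R^{n+1}$, and pull the resulting $C^{1,1}$ hypersurface back by $\varphi^{-1}$, using Remark \ref{rem-curvature} to preserve the vanishing of the Gauss--Kronecker curvature. Your two supplementary checks --- that the Guan--Spruck solution lies in the convex hull of $\bar g(N)$ and hence inside $B^{n+1}$ in the hyperbolic case, and that the Beltrami computations survive the drop to $C^{1,1}$ regularity --- are points the paper passes over silently, and they are handled correctly.
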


\begin{proof}
It follows from Proposition \ref{prop-beltramimaps}
that  the second fundamental form of the immersion \,$\bar{g}=\varphi\circ g:N\rightarrow\R^{n+1}$\,  is semi-definite,
and definite in a neighborhood of \,$\partial N.$\, Thus,
\,$\overline{\Gamma}:=\varphi({\Gamma})=\bar{g}(\partial N)$\, is a compact embedded submanifold of
\,$\R^{n+1}$\, which fulfills the hypothesis of Theorem 1.2 in \cite{guan-spruck}. Therefore, there exists a connected compact
oriented    \,$C^{1,1}$\, (up to the boundary) hypersurface with boundary,
\,$\bar{f}: M\rightarrow\R^{n+1},$\, whose second fundamental form is semi-definite,
which  satisfies \,$\bar{f}(\partial M)=\overline{\Gamma}$\,
and has vanishing Gauss-Kronecker curvature everywhere. Hence (see Remark \ref{rem-curvature}),
$$f=\varphi^{-1}\circ\bar{f}:M^n\rightarrow\mathcal{H}^{n+1}$$
is the desired hypersurface.
\end{proof}

An isometric immersion \,$f:M^n\rightarrow\tilde{M}^{n+p}$\,
is said to have the \emph{convex hull property}
if, for every domain \,$D$\,  on \,$M$\, such that \,$f(D)$\, is bounded in \,$\tilde{M},$\,
\,$f(D)$\,  lies in the convex hull of its boundary in \,$\tilde{M}.$\,
It is a well known fact that minimal immersions in Euclidean space have this property.

In \cite{osserman}, R. Osserman established that an isometric immersion into Euclidean space
has the convex hull property if and only if there is no normal direction in which
its second fundamental form is definite. Since Beltrami maps are convexity-preserving, Osserman's theorem and
Proposition \ref{prop-beltramimaps} give the following result, which, in the spherical case, generalizes a theorem due
to B. Lawson (Theorem 1' in \cite{lawson}).

\begin{corollary} \label{cor-interiorCHP}
An isometric immersion \,$f:M^n\rightarrow\mathcal{H}^{n+p}$\, has the convex hull property if and only if
there is no normal direction \,$\xi$\, for which the $2$-form
$$
(X,Y)\in TM\times TM\mapsto\langle\alpha_f(X,Y),\xi\rangle
$$
is definite. In particular, any minimal isometric immersion \,$f:M^n\rightarrow\mathcal{H}^{n+p}$\, has the
convex hull property.
\end{corollary}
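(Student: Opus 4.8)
The plan is to transport the statement to Euclidean space by means of the Beltrami map and then invoke Osserman's theorem. Set $\bar{f}=\varphi\circ f:M^n\to\R^{n+p}$, where $\varphi$ is the Beltrami map of $\mathcal{H}^{n+p}$, whose image is all of $\R^{n+p}$ in the spherical case and the open unit ball $B^{n+p}$ in the hyperbolic one. The first and principal step is to check that $f$ has the convex hull property if and only if $\bar{f}$ does. Since $\varphi$ and $\varphi^{-1}$ are geodesic diffeomorphisms they carry convex sets to convex sets, and hence, a convex hull being the intersection of all convex supersets, they carry convex hulls to convex hulls; in the hyperbolic case one uses in addition that $B^{n+p}$ is itself convex, so that the Euclidean convex hull of any set relatively compact in $B^{n+p}$ still lies in $B^{n+p}$. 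Moreover $\varphi$ is a homeomorphism onto its open image, hence commutes with taking topological boundaries and carries relatively compact subsets of $\mathcal{H}^{n+p}$ exactly to relatively compact subsets of $\varphi(\mathcal{H}^{n+p})$. Combining these remarks, for a domain $D\subset M$ the boundedness hypothesis on $f(D)$ and the requirement that $f(D)$ lie in the convex hull of its boundary correspond, under $\varphi$, to the analogous statements for $\bar{f}(D)$ in $\R^{n+p}$; therefore $f$ has the convex hull property precisely when $\bar{f}$ does.

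Next, by Osserman's theorem, $\bar{f}$ has the convex hull property if and only if at no point $x\in M$ is there a direction $\xi\in T_xM^\perp$ making the $2$-form $(X,Y)\mapsto\langle\alpha_{\bar{f}}(X,Y),\xi\rangle$ on $T_xM$ definite. By part (i) of Proposition \ref{prop-beltramimaps}, applied pointwise, $\alpha_{\bar{f}}$ is definite in a normal direction at $x$ exactly when $\alpha_f$ is; so the previous condition is equivalent to the assertion that $\alpha_f$ is nowhere definite in a normal direction. Chaining these three equivalences gives the characterization asserted for $f$.

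For the last statement: if $f$ is minimal then $\text{trace}\,A_\xi=0$ for every normal field $\xi$, and a self-adjoint operator with zero trace on a vector space of positive dimension can be neither positive nor negative definite; hence $f$ has no definite normal direction and, by the characterization just established, has the convex hull property.

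The step that demands care is the first one, the equivalence of the convex hull property for $f$ and for $\bar{f}$. In the spherical case it is essentially immediate, since there $\varphi$ is a geodesic diffeomorphism of the hemisphere onto all of $\R^{n+p}$. In the hyperbolic case the only genuine subtlety is to keep track of relative compactness in $B^{n+p}$, rather than mere boundedness in $\R^{n+p}$, so that the convex hulls of boundaries remain inside $B^{n+p}$ and $\varphi^{-1}$ is still defined on them -- this is exactly where the convexity of $B^{n+p}$ enters. Alternatively, the ``only if'' implication can be obtained without the full force of Osserman's theorem: if $\alpha_f$, hence $\alpha_{\bar{f}}$, is definite in a normal direction at an interior point $x_0$, the associated height function of $\bar{f}$ has a strict local extremum there, so for a small geodesic ball $D$ around $x_0$ the point $\bar{f}(x_0)$ lies outside the convex hull of $\bar{f}(\partial D)$; pulling this back by $\varphi^{-1}$ shows $f$ fails the convex hull property.
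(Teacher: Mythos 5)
Your argument is correct and follows the same route the paper takes, namely transporting the statement to Euclidean space via the Beltrami map, invoking Osserman's theorem there, and using Proposition \ref{prop-beltramimaps}(i) to match up the definite normal directions of $f$ and $\bar{f}$. The paper leaves the transfer of the convex hull property under $\varphi$ implicit (``Beltrami maps are convexity-preserving''), whereas you spell out the details, including the relative-compactness bookkeeping in the Klein ball; this is a welcome elaboration, not a different proof.
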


In \cite{alexander-ghomi}, Alexander and Ghomi showed that, in Euclidean space,
the convex hull of the boundary of a hypersurface  whose second fundamental form
is semi-definite has a property which is a dual of the classical one
we considered above. This result, together with Proposition \ref{prop-beltramimaps}, yields
the following

\begin{corollary} \label{cor-exteriorCHP}
Let \,$f:M^n\rightarrow\mathcal{H}^{n+1}$\,  be a compact connected hypersurface with boundary,
whose second fundamental form \,$\alpha_f$\, is semi-definite.
Let \,$C$\, be the convex hull of \,$f(\partial M)$\, and assume that:
\begin{itemize}[parsep=1ex]
\item[{\rm i)}]  $f(\partial M)\subset\partial C.$
\item[{\rm ii)}]   $\alpha_f$\, is definite in a neighborhood of \,$\partial M.$
\item[{\rm iii)}]  \,$f$\,  is an embedding on each component of \,$\partial M.$\,
\end{itemize}
Then, the image of the interior of \,$M$\, lies completely outside \,$C,$\, that is,
$$
f({\rm int}\,M)\cap C=\emptyset.
$$
\end{corollary}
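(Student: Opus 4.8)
The plan is to transport the entire configuration to Euclidean space via the Beltrami map and invoke the Alexander–Ghomi theorem there. Let $\varphi:\mathcal{H}^{n+1}\rightarrow\R^{n+1}$ be the Beltrami map associated to $\mathcal{H}^{n+1}$ (an open hemisphere of $S^{n+1}$ or all of $\h^{n+1}$), and consider the immersion $\bar f:=\varphi\circ f:M^n\rightarrow\R^{n+1}$, where $M$ carries the metric induced by $\bar f$. By Proposition \ref{prop-beltramimaps}(i), the second fundamental form $\alpha_{\bar f}$ is semi-definite on all of $M$ (since $\alpha_f$ is), and it is definite in a neighborhood of $\partial M$ by hypothesis (ii). Since $\varphi$ is a diffeomorphism, $\bar f$ remains a compact connected hypersurface with boundary, and $\bar f$ is an embedding on each component of $\partial M$ by hypothesis (iii), because $\bar f|_{\partial M}=\varphi\circ(f|_{\partial M})$ and $\varphi$ is injective. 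Thus $\bar f$ satisfies the hypotheses of the Alexander–Ghomi result \cite{alexander-ghomi}.

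The remaining point requiring care is the behavior of convex hulls under $\varphi$. Let $C\subset\mathcal{H}^{n+1}$ be the convex hull of $f(\partial M)$ and let $\overline C\subset\R^{n+1}$ be the convex hull of $\bar f(\partial M)=\varphi(f(\partial M))$. Because $\varphi$ and $\varphi^{-1}$ are geodesic diffeomorphisms, hence map convex sets to convex sets, one has $\varphi(C)=\overline C$; in particular $\varphi$ carries $\partial C$ to $\partial\overline C$ and the interior of $C$ to the interior of $\overline C$. Therefore hypothesis (i), $f(\partial M)\subset\partial C$, is equivalent to $\bar f(\partial M)\subset\partial\overline C$, so this hypothesis of Alexander–Ghomi is met as well. (This is the step I expect to be the most delicate: one must confirm that "convex hull" for a hemisphere or for $\h^{n+1}$ is exactly the notion preserved by $\varphi$, i.e. that $C$ is the smallest geodesically convex set containing $f(\partial M)$, so that $\varphi$ genuinely intertwines the two convex-hull operations; for the hemisphere one uses that $\mathcal{H}^{n+1}$ is itself geodesically convex and $\varphi$ is a global geodesic diffeomorphism onto $\R^{n+1}$, and similarly for the Kleinian model of $\h^{n+1}$.)

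Applying the Alexander–Ghomi theorem to $\bar f$ now yields $\bar f(\mathrm{int}\,M)\cap\overline C=\emptyset$. Pulling back by the bijection $\varphi$, and using $\varphi(\mathrm{int}\,M$-image$)=\bar f(\mathrm{int}\,M)$ together with $\varphi(C)=\overline C$, we conclude
$$
f(\mathrm{int}\,M)\cap C=\varphi^{-1}\bigl(\bar f(\mathrm{int}\,M)\bigr)\cap\varphi^{-1}(\overline C)=\varphi^{-1}\bigl(\bar f(\mathrm{int}\,M)\cap\overline C\bigr)=\emptyset,
$$
which is the desired conclusion. The argument is structurally identical to the proofs of Corollary \ref{cor-prescribedboundary} and Corollary \ref{cor-interiorCHP}: the only substantive inputs are Proposition \ref{prop-beltramimaps}, the convexity-preserving property of Beltrami maps, and the Euclidean theorem of Alexander and Ghomi; no new computation is needed.
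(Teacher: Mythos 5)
Your proposal is correct and follows exactly the route the paper intends: the paper derives Corollary \ref{cor-exteriorCHP} precisely by combining the Alexander--Ghomi theorem with Proposition \ref{prop-beltramimaps} and the convexity-preserving property of the Beltrami map. You have merely spelled out the details (including the correct identification of the two convex hulls under $\varphi$) that the paper leaves implicit.
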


We remark that, in Corollary \ref{cor-exteriorCHP},
none of the conditions (i)---(iii) can be omitted (see  \cite{alexander-ghomi}).

\section{Proofs of Theorems \ref{th-convexity} --- \ref{th-hypersurface}} \label{sec-proof1and2}

We proceed now to the proofs of the theorems.
To show  Theorem \ref{th-convexity}, we    recover the Euclidean case by
means of  Proposition \ref{prop-beltramimaps}, and then apply   Jonker's Theorem \cite{jonker}, stated below.
The same goes to Theorem \ref{th-hypersurface}, in which Hadamard Theorem \cite{hadamard1}  plays  the role of Jonker's Theorem.
Theorem \ref{th-hconvexity} will be derived from  propositions \ref{prop-reductioncodimension} and \ref{prop-parallel},
Theorem \ref{th-convexity}, and the main results of \cite{currier}.

\begin{jt}
Let \,$M^n$\, be a complete connected Riemannian manifold, and  \,$f:M^n\rightarrow\R^{n+p}$\,   an
isometric immersion with semi-definite second fundamental form. Then, one of the
following two possibilities holds:

\begin{itemize}[parsep=1ex]

\item[{\rm i)}] There is an affine subspace \,$\R^{n+1}$  of  \,$\R^{n+p}$\,  such that \,$f$\, embeds
\,$M$\, onto the boundary of a convex set of \,$\R^{n+1}.$\, In particular, \,$M$\, is diffeomorphic to
\,$S^n$ or to \,$\R^n.$

\item[{\rm ii)}] The immersion \,$f$\, is an $(n-1)$-cylinder over a curve \,$\gamma:\R\rightarrow\R^{p+1},$\, that is,
\,$M$\, and \,$f$\, split respectively as \,$\R\times\R^{n-1}$ and  \,$\gamma\times {\rm id},$\,
where \,${\rm id}$\, stands for the identity map of \,$\R^{n-1}.$

\end{itemize}
\end{jt}

\begin{remark}\label{rem-jonker}
Along the proof of Jonker's Theorem, it is shown that  the possibility (i) occurs only if \,$M-M_{{\rm tot}}$\, is connected.
\end{remark}

\begin{proof}[Proof of Theorem \ref{th-convexity}]
Let us prove first that, for   \,$c=1,$\,
there is an open hemisphere of \,$S^{n+p}$\, which contains \,$f(M).$

Since \,$\alpha_f$\, is semi-definite,
by Proposition \ref{prop-sdsr},
the first normal space of \,$f$\, at any point \,$x\in M-M_{\rm tot}$\,
is spanned by the mean curvature vector \,$H(x).$\, In this case,
there is a point \,$x\in M-M_{\rm tot}$\,
at which  \,$\alpha_f$\, is positive-definite in the direction \,$H(x),$\,  that is,  all the
eigenvalues of \,$A_H$\, at \,$x$\, are  positive. Indeed, if it were not so,
the index of minimum relative nullity of \,$f,$\, \,$\nu_{\min}$\,,
would satisfy \,$0<\nu_{\min}<n.$\,
Then, by a result due to  Dajczer and Gromoll \cite{dajczer-gromoll}, at any \,$x\in M$\, such that \,$\nu(x)=\nu_{\min},$\,
the number of positive and negative eigenvalues of \,$A_H$\,  would be equal. However, by Proposition \ref{prop-sdsr},
all the eigenvalues of \,$A_H$\, are nonnegative.

Let then \,$x\in M-M_{\rm tot}$\, be a point  at which  \,$\alpha_f$\, is positive-definite in the direction \,$H(x).$\,
Denote by  \,$\mathcal{H}_{f(x)}$\, the open hemisphere
of \,$S^{n+p}$\, centered at \,$f(x),$\, and let
\,$\varphi:\mathcal{H}_{f(x)}\rightarrow\R^{n+p}$\, be its  Beltrami map.
Write \,$N$\, for  the connected component of
\,$f^{-1}(\mathcal{H}_{f(x)})$\, that contains \,$x,$\,  and endow it with the metric induced by the immersion
\,$
\bar f=\varphi\circ f|_N:N\rightarrow\R^{n+p}.
$\,
By Proposition \ref{prop-beltramimaps},
\,$\bar{f}$\, is an isometric immersion with  semi-definite second fundamental form. Furthermore,
as shown by do Carmo and Warner
in Lemma 2.5 of \cite{docarmo-warner}, which is valid regardless of the codimension,  \,$\bar f$\, is
also  complete.

It follows from these  considerations and  Jonker's Theorem that  \,$\bar{f}(N)\subset\R^{n+p}$\, is either a cylinder over a curve
or the boundary of a convex set in an $(n+1)$-dimensional  affine subspace of
\,$\R^{n+p},$\, which we identify with \,$\R^{n+1}$\,. In the latter case, \,$\bar{f}|_N:N\rightarrow\R^{n+1}$\, is an embedding.
However, by Proposition \ref{prop-beltramimaps}, \,$\alpha_{\bar{f}}$\, is definite at \,$x,$\, which excludes the
possibility of \,$\bar{f}(N)$\, being a cylinder.

The definiteness of  \,$\alpha_{\bar{f}}$\,  at \,$x$\,  also implies that
there exists an open neighborhood \,$U$\, of \,$x$\, in \,$M,$\, such that
\,$\bar{f}({\rm cl}(U)-\{x\})$\, is contained in  one  of the open semi-spaces
of \,$\R^{n+1}$\, determined by  \,$\Sigma:=\bar{f}_*(T_xM),$\, say
\,$\Sigma_+$\,.


Let \,$\eta\in\R^{n+1}$\, be the unit normal to \,$\bar{f}(M)$\, at \,$\bar{f}(x)$\, pointing to \,$\Sigma_+$\,. If we
write \,$\Pi$\, for the orthogonal complement of \,$\eta$\, in \,$\R^{n+p},$\, we have that
\,$\bar{f}({\rm cl}(U)-\{x\})$\, is contained in the open semi-space \,$\Pi_+$\, of \,$\R^{n+p}$\, which is determined by \,$\Pi$\,
and  contains \,$\Sigma_+$\,.
Hence, denoting  by  \,$\mathcal{H}$\,  the open  hemisphere
of \,$S^{n+p}$\, that contains \,$\varphi^{-1}(\Pi_+),$\,  we have that
\,$f(x)\in\partial\mathcal{H}$\, and
\,$f({\rm cl}\,U-\{x\})\subset \mathcal{H}.$\,

Now, notice that \,$\partial\mathcal{H}$\, is a local
$(n+p-1)$-dimensional ``supporting sphere'' for \,$f(M)$\, at \,$f(x).$\,
This fact will allow us to adapt the reasoning of the last two paragraphs
of the proof of Theorem 1.1(a) in \cite{docarmo-warner} to conclude
that, in fact,  \,$f(M)-\{f(x)\}\subset \mathcal{H}.$

The argument goes as follows.
Let  \,$C\subset f^{-1}(\mathcal{H})\subset M$\, be the connected component of
\,$f^{-1}(\mathcal{H})$\, that contains \,${\rm cl}\,U-\{x\}.$\, By abuse of notation, write
\,$\varphi$\, for the Beltrami map of \,$\mathcal{H}$\, and
set \,$\bar{f}=\varphi\circ f:C\rightarrow\R^{n+p}.$\, Thus, \,$\bar{f}$\, is a noncompact complete and  non-flat
isometric immersion with semi-definite second fundamental form.

By Jonker's Theorem, \,$\bar{f}(C)$\, is diffeomorphic
to \,$\R^n.$\, Assume that
\,$\partial U$\, is a geodesic sphere of \,$M$\,
centered at \,$x$\, so that
\,$\bar{f}(\partial U)$\, separates \,$\bar{f}(C)$\, into two connected components, being one of them
bounded. Denoting by \,$\Omega$\, this bounded component, we have that \,$\bar{f}(C-{\rm cl}\,U)\subset\Omega.$\, Otherwise,
there would be \,$y\in C-{\rm cl}\,U$\, such that \,$\bar{f}(y)\in \bar{f}(C)-{\rm cl}\,\Omega.$\, Then, a minimal geodesic
\,$\gamma$\, joining \,$y$\, to \,$x$\, would  cross \,$\partial U$\,  only once, and its image  \,$\bar{f}(\gamma)$\, would be unbounded.
This, however,  is a contradiction, for  \,$\bar{f}(\gamma)$\,  comes from the unbounded component
\,$f(C)-{\rm cl}\,\Omega,$\, and  cross \,$\partial\Omega$\, only once. Therefore,
\,$f(x)$\, is the only limit point of \,$f(C)$\, on \,$\partial\mathcal{H},$\, that is,
\,$f(C)\subset f({\rm cl}\,U-\{x\})\subset\mathcal{H}.$\, Thus, since
\,$M$\, is connected, one has \,$f(M)-\{f(x)\}\subset \mathcal{H},$\, as claimed.

Finally, choosing \,$e\in S^{n+p}$\, sufficiently close to the center of \,$\mathcal{H},$\,
we have that \,$f(M)$\, is contained in the open hemisphere of \,$S^{n+p}$\, centered at \,$e,$
as we wished to prove.

Suppose now that \,$c\ne 0$\, and let \,$\mathcal{H}^{n+p}$\,
be either an open hemisphere of \,$S^{n+p}$\, that contains \,$f(M)$\, or the hyperbolic space \,$\h^{n+p}.$\,
As before, if we write
$$\varphi:\mathcal{H}^{n+p}\rightarrow\R^{n+p}$$
for the Beltrami map of \,$\mathcal{H}^{n+p},$\, we have that
the second fundamental form of the immersion \,$\bar{f}=\varphi\circ f$\, is semi-definite.
Thus, since \,$M$\, is compact,
it follows from Jonker's Theorem  that \,$\bar{f}$\, is an embedding and \,$\bar{f}(M)$\,
is the boundary of a compact convex set in an $(n+1)$-dimensional
affine subspace \,$\R^{n+1}\subset\R^{n+p}.$\, Therefore,
\,$f=\varphi^{-1}\circ\bar{f}$\, embeds \,$M$\, onto
the boundary of a compact convex  set of \,$\mathcal{H}^{n+1}=\varphi^{-1}(\R^{n+1})\subset\mathcal{H}^{n+p},$\,
for \,$\varphi$\, is  convexity-preserving.

Since we have reduced the codimension to one, we can apply
do Carmo and Warner Theorem \cite{docarmo-warner} to conclude that \,$f$\, is rigid for \,$c=1.$
For \,$c=-1$\, and \,$n=2,$\, the rigidity of \,$f$\, follows from
Theorem 5 of \cite{fomenko-gajubov}, which, in fact, is set in the more general
context of compact surfaces with  boundary.

Suppose then that \,$c=-1$\, and \,$n > 2.$\, Since the set of totally geodesic points of \,$\bar{f}$\, does not disconnect \,$M$\,
(see Remark \ref{rem-jonker}),  it follows from Proposition \ref{prop-beltramimaps}-(ii) that the same is true for \,$f.$\,
Thus, Sacksteder Rigidity Theorem applies and gives that \,$f$\, is rigid. This concludes our proof.
\end{proof}

\begin{proof}[Proof of Theorem \ref{th-hconvexity}]
It follows from  theorems A and B of \cite{currier} that, except for the rigidity in the compact case,
the result is true if the codimension  of \,$f$\,  is equal to one.
Otherwise,  we can reduce the codimension to one.

Indeed, since \,$\alpha_f$\, is clearly  positive-definite, we have that
\,$M_c$\, is empty  and, by Proposition \ref{prop-sdsr}, that
\,$f$\, is $(1\,;1)$-regular. So, by Proposition \ref{prop-parallel}, the first normal bundle of \,$f$\, is parallel. Since
\,$M$\, is connected, Proposition \ref{prop-reductioncodimension} gives that \,$f$\,  admits a reduction of codimension to one.

The rigidity of \,$f$\, when \,$M$\, is compact follows from Theorem \ref{th-convexity}.
\end{proof}

\begin{proof}[Proof of Theorem \ref{th-hypersurface}]
Considering  Proposition \ref{prop-beltramimaps} together with its notation,
we have that \,$\alpha_f$\, is definite  if and only if \,$\alpha_{\bar{f}}$\, is definite.
But, by  Hadamard Theorem \cite{hadamard1} (see also \cite{dajczer} -- Chapter 2), \,$\alpha_{\bar{f}}$\, is definite
if and only if \,$\bar{f},$\, and so \,$f,$\, has non-vanishing Gauss-Kronecker curvature (see Remark \ref{rem-curvature}).
This proves the equivalence between (i) and (ii).

Now, notice that  \,$\psi={\bar{\xi}}/{\|\bar{\xi}\|}$\, is nothing but the Gauss map of \,$\bar{f}.$\,
So, again by Hadamard Theorem, \,$M$\, is orientable and
\,$\psi$\, is  a diffeomorphism if and only if \,$\alpha_{\bar{f}}$\,, and so \,$\alpha_f$\,, is
definite, which shows that (i) is also equivalent to (iii).

The last assertion follows directly from Theorem \ref{th-convexity}.
\end{proof}

\section{Proof of Theorem \ref{th-semiregular}} \label{sec-proof3}

The proof of Theorem \ref{th-semiregular} will be based on some properties of
the relative nullity distribution that we shall  introduce in the following.

Let \,$f:M^n\rightarrow \tilde Q_c^{n+p}$\, be an isometric immersion
whose index of relative nullity
is constant and equal to \,$\nu_0>0$\, in some open set \,$U\subset M.$\,
It is well known that, under these conditions, the corresponding relative nullity distribution
$$x\in U\mapsto \Delta(x)\subset T_xM$$
is smooth and integrable, and its leaves are totally geodesic in
\,$M$\, and $Q_c^{n+p}.$\, Furthermore, if \,$\nu_0=\nu_{\min}$\,, these leaves  are
complete whenever \,$M$\, is complete.

In this setting, assuming that \,$\gamma:[0,a]\rightarrow M$\, is a
geodesic of \,$M$\, such that \,$\gamma([0,a))$\, is contained in a
leaf \,$\mathfrak{L}\subset U$\, of the
relative nullity distribution \,$\Delta,$\, we have that
\begin{itemize}[parsep=1ex]

\item[\small{\bf (P1)}] $\nu(\gamma(a))=\nu_0$\,.

\item[\small{\bf (P2)}] The first normal space \,$\mathscr{N}$\, is parallel at \,$\gamma(t)\, \forall t\in[0,a),$\, provided \,$\mathscr{N}$\, is
parallel at \,$\gamma(a).$

\end{itemize}

Property {\Small{\bf (P1)}}, together with the integrability of the relative nullity distribution,
is proved in \cite{ferus} (see also \cite{dajczer}--Chapter 5), and
property {\Small{\bf (P2)}} is  the content of Lemma 4 of \cite{rodriguez-tribuzy}.

The next result is due to D. Ferus, which he used in his proof of
Sacksteder Rigidity Theorem, as presented in \cite{dajczer}--Chapter 6.

\begin{proposition}[Ferus] \label{prop-incompleteleaves}
Let \,$f:M^n\rightarrow Q_c^{n+p}$\, be an isometric immersion, where \,$M^n$\, is assumed to be
compact for \,$c\le 0,$\, and complete otherwise. Assume further that,
in an open set \,$U\subset M,$\, \,$\nu= n-1.$\,
Then, no leaf of the relative nullity foliation is complete.
\end{proposition}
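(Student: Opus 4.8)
The plan is to argue by contradiction: assume that some leaf $\mathfrak{L}$ of the relative nullity foliation in $U$ is complete, and derive a contradiction with the curvature/completeness hypotheses on $M$. First I would observe that since $\nu=n-1$ on the open set $U$, each leaf $\mathfrak{L}$ of $\Delta$ is an $(n-1)$-dimensional totally geodesic submanifold of both $M$ and $Q_c^{n+p}$; its second fundamental form as a submanifold of $Q_c^{n+p}$ vanishes, so $\mathfrak{L}$ is an open subset of a totally geodesic $Q_c^{n-1}\subset Q_c^{n+p}$. If $\mathfrak{L}$ is complete, it is therefore a \emph{complete} totally geodesic $Q_c^{n-1}$. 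When $c>0$ this forces $\mathfrak{L}$ to be a round sphere $S^{n-1}$, hence compact; when $c\le 0$ it is $\R^{n-1}$ or $\h^{n-1}$, which is noncompact, but here $M$ itself is assumed compact. Either way we obtain a complete totally geodesic leaf, and the goal is to show this is impossible because of how $\mathscr{N}$ and the ``splitting tensor'' of $\Delta$ behave along it.

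The heart of the matter is the standard structure theory of the relative nullity distribution. Along a leaf $\mathfrak{L}$ one has the splitting (conullity) tensor $C:\Delta\to\mathrm{End}(\Delta^\perp)$, and by a classical computation (the Jacobi-type ODE for $C$ along geodesics of $\mathfrak{L}$, see \cite{dajczer}--Chapter 5) $C$ satisfies $C'=C^2$ along any unit-speed geodesic of $\mathfrak{L}$ in $U$; this is exactly the mechanism that makes the leaves of minimum relative nullity complete (Property {\Small{\bf (P1)}}) when $M$ is complete, but also shows that if the leaf were complete and $\Delta^\perp$ nontrivial, then $C$ would have to be identically zero along $\mathfrak{L}$ — otherwise the ODE $C' = C^2$ produces a solution that blows up in finite time, contradicting that $\nu\equiv n-1$ on all of $U\supset \mathfrak{L}$. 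So on a complete leaf $C\equiv 0$, meaning $\Delta$ is totally geodesic \emph{and} its orthogonal complement is parallel along $\mathfrak{L}$; in particular the single nonzero principal curvature direction $\Delta^\perp$ is parallel along $\mathfrak{L}$, and one checks (using the Codazzi equation, as in the proof of Proposition \ref{prop-parallel}) that the corresponding eigenvalue is constant along $\mathfrak{L}$ and the first normal direction $\xi$ spanning $\mathscr{N}$ is parallel along $\mathfrak{L}$, i.e. $\nabla^\perp_X\xi=0$ for $X\in\Delta$.

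With $C\equiv 0$ on the complete leaf, the final step is to extract a contradiction. In the case $c>0$ the leaf is a complete $S^{n-1}$; restricting $f$ to a tubular neighbourhood of $\mathfrak{L}$ and using that $\Delta^\perp$ and $\xi$ are parallel along $\mathfrak{L}$, the leaf together with the normal data determines an immersed hypersurface-type piece whose Gauss equation forces, on $\mathfrak{L}$, sectional curvatures equal to $c$ in $\Delta$-directions but $c+\lambda\mu$ otherwise — and one produces a closed geodesic of $\mathfrak{L}$ along which the second fundamental form of $M$ in $Q_c^{n+p}$ would have to ``close up'' inconsistently, precisely the Ferus argument in \cite{dajczer}--Chapter 6. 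In the case $c\le 0$ with $M$ compact, completeness of the leaf $\mathfrak{L}=\R^{n-1}$ or $\h^{n-1}$ inside the compact manifold $M$ is itself the contradiction: a complete noncompact totally geodesic submanifold cannot sit inside a compact Riemannian manifold as a leaf of a smooth foliation with $\nu$ locally constant, because one can follow a geodesic ray in $\mathfrak{L}$, pass to a limit leaf, and violate {\Small{\bf (P1)}} (the index would have to drop). I expect the main obstacle to be making this last ``closing-up'' step rigorous — i.e. carefully tracking the splitting tensor and the normal connection along $\mathfrak{L}$ and showing the resulting contradiction — rather than the soft topological parts; this is exactly where one must invoke the detailed computations of \cite{ferus} and \cite{dajczer}--Chapter 6, so I would cite those for the technical core and only sketch how the $\nu=n-1$ hypothesis feeds into them.
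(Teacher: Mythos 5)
The paper does not actually prove this proposition; it is quoted from Ferus as presented in \cite{dajczer}, Chapter 6, so your sketch must be judged on its own. You correctly identify the splitting tensor of the relative nullity distribution as the relevant tool, but you state the wrong differential equation, and the error lands precisely on the only hard case. Along a unit-speed geodesic $\gamma$ inside a leaf, the splitting tensor of the \emph{relative} nullity distribution of an immersion into $Q_c^{n+p}$ satisfies the Riccati equation $C_{\gamma'}'=C_{\gamma'}^2+c\,I$, not $C'=C^2$; the term $c\,I$ comes from the Gauss equation evaluated on directions in $\Delta$ (where $\alpha_f$ vanishes). Since $\nu=n-1$, this is the scalar equation $y'=y^2+c$. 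For $c>0$ \emph{every} solution blows up in finite time (it is $\tan$ up to translation), so a complete leaf is impossible immediately --- no auxiliary ``closing up along a closed geodesic of $S^{n-1}$'' argument is needed, and the one you gesture at is too vague to carry the proof. With your equation $y'=y^2$, the constant solution $y\equiv 0$ is global and the $c>0$ case simply does not close. Moreover, your claim that completeness of the leaf forces $C\equiv 0$ is false for $c<0$: the equation $y'=y^2-1$ admits the bounded global solutions $y\equiv\pm 1$ and $y=-\tanh$.

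For $c\le 0$ the intended contradiction is extrinsic and elementary, and your intrinsic replacement is unsound. A complete leaf is carried by $f$ onto a complete totally geodesic $(n-1)$-dimensional submanifold of $\R^{n+p}$ or $\h^{n+p}$, which is unbounded; but it lies inside $f(M)$, which is compact. That is the whole argument in this case. By contrast, the assertion that ``a complete noncompact totally geodesic submanifold cannot sit inside a compact Riemannian manifold as a leaf of a smooth foliation'' is false intrinsically --- an irrational linear foliation of a flat torus has exactly such leaves --- and the proposed step of ``passing to a limit leaf and violating \textbf{(P1)}'' yields no contradiction: \textbf{(P1)} only controls the index at the endpoint of a geodesic segment whose interior lies in a leaf, and nothing forces the index to drop on a limit leaf. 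So both halves of your argument need replacing: the $c>0$ half by the correct Riccati equation $y'=y^2+c$, and the $c\le 0$ half by the ambient unboundedness of the image of a complete leaf.
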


\begin{proof}[Proof of Theorem \ref{th-semiregular}]

Let us notice first that \,$M-M_c$\, is nonempty. Indeed, if it were not so,
we would have \,$\nu= n-1$\, in the
open set \,$M_c-M_{\rm tot}=M-M_{\rm tot}$\, (see Remark \ref{rem-nullity}).
In this case, \,$\Delta\subset T(M-M_{\rm tot})$\, would be  the minimum relative
nullity distribution and   its leaves would then be complete, since \,$M$\, is complete.
But, by Proposition \ref{prop-incompleteleaves}, this is impossible.

Next, we prove  that the first normal bundle
\,$\mathscr{N}$\, of \,$f$\, is parallel in \,$M-M_{\rm tot}$\,.
From Proposition \ref{prop-parallel}, \,$\mathscr{N}$\,  is parallel in the closure of \,$M-M_c$\,.
So, it remains to prove  that \,$\mathscr{N}$\, is parallel in \,$U={\rm int}(M_c-M_{\rm tot}),$\, which we can  assume is nonempty.
Given  \,$x\in U,$\, it follows from Proposition \ref{prop-incompleteleaves}
that any geodesic contained in a leaf \,$\mathfrak{L}\owns x,$\, with \,$x$\, as initial point,
must intersect the boundary of \,$U$\, at a point \,$y.$\, However, by property {\Small{\bf (P1)}},
\,$\nu(y)=n-1.$\, Hence, \,$y\not\in M_{\rm tot}$\,, which implies  that \,$y$\, is in the closure of \,$M-M_c$\,.
In particular, \,$\mathscr{N}$\, is parallel at \,$y.$\, This, together
with property {\Small{\bf (P2)}}, gives that \,$\mathscr{N}$\, is parallel at \,$x,$\,
and thus in all of \,$M-M_{\rm tot}$\,.

Since we are assuming that \,$M-M_{\rm tot}$\, is connected, it follows from
Proposition \ref{prop-reductioncodimension}  that
\,$f({\rm cl}(M-M_{\rm tot}))$\, is contained in a totally geodesic
submanifold \,$Q_c^{n+1}$\, of  \,$Q_c^{n+p}.$\,
In particular, for all \,$x\in\,{\rm cl}(M-M_{\rm tot}),$\, one has
\,$f_*(T_xM)\subset T_{f(x)} Q_c^{n+1}.$

If \,${\rm int}\, M_{\rm tot}$\, is nonempty, according to results
in \cite{obata}, the generalized Gauss map of \,$f$\, is constant on any connected component
\,$C$\, of \,$M_{\rm tot}$\,, which implies that \,$f(C)$\, is contained in
an $n$-dimensional totally geodesic submanifold \,$Q_c^n\subset Q_c^{n+p}$\,.
So, by continuity, for any \,$x\in\partial C,$\, one has
$$T_{f(x)}Q_c^n=f_*(T_xM)\subset T_{f(x)}Q_c^{n+1},$$
that is, \,$Q_c^n$\, is tangent to \,$Q_c^{n+1}$\, at \,$f(x).$\,
Therefore, \,$f(C)\subset Q_c^n\subset Q_c^{n+1},$\, from which we infer that
\,$f(M_{\rm tot})\subset Q_c^{n+1},$\, and thus that \,$f(M)\subset Q_c^{n+1}.$

Now, since we have reduced the codimension to one, statement (ii) follows directly from the hypothesis and
Sacksteder Rigidity Theorem.

Finally, in order to prove (i),
assume that \,${\rm Ric}_{\scriptscriptstyle{M}}\ge c.$\, Then, by the Bonnet-Myers Theorem,
\,$M$\, is compact for \,$c>0.$\,

Given a unit tangent field \,$X\in TM,$\, if
\,$\{X_1, X_2\,, \dots ,X_n\}\in TM$\, is an orthonormal tangent frame on \,$M$\,
such that \,$X_1=X,$\, it is well known that the equality
$$
{\rm Ric}_{\scriptscriptstyle{M}}(X)= c+\frac{1}{n-1}\langle\alpha_f(X,X),H\rangle-\frac{1}{n-1}\sum_{i=1}^{n}\|\alpha_f(X,X_i)\|^2
$$
holds.

So, if \,$H(x)=0,$\,  one has \,$\alpha_f(X,X_i)=0 \, (i=1,\dots ,n).$\, In particular,
\,$\alpha_f(X,X)=0,$\,
which implies that \,$x\in M_{\rm tot}$\,, since \,$X$\, is arbitrary.
Otherwise, \,$\langle\alpha_f(X,X),H\rangle$\, is nonnegative. Thus,
again from the fact that we are in  codimension one, we have that \,$\alpha_f$\, is semi-definite. This, together with
Jonker's Theorem (for \,$c=0$) and Theorem \ref{th-convexity} (for \,$c\ne 0$), yields (i) and finishes our proof.
\end{proof}

\end{document}